\tikzstyle{chamberlabel} = [draw, teal!60!gray, shape=circle, minimum size=5pt, inner sep=1pt]
\tikzstyle{invisivertex} = [black, shape=rectangle, minimum size=0pt, inner sep=2pt]
\tikzstyle{point}=[draw, black, fill,shape=circle, minimum size=4pt, inner sep=0pt]
\tikzstyle{moebius} = [draw, teal, shape=circle, minimum size=8pt, inner sep=1.5pt]
\newcommand{\A}{\mathcal{A}}
\newcommand{\K}{\mathcal{K}}
\newcommand{\LL}{\mathcal{L}}
\newcommand{\LI}{\mathcal{L}^{\text{int}}}
\newcommand{\Z}{\mathbb{Z}}
\newcommand{\Q}{\mathbb{Q}}
\newcommand{\C}{\mathcal{C}}
\newcommand{\R}{\mathbb{R}}
\newcommand{\rank}{\mathsf{rank}}
\newcommand{\kk}{\mathbb{F}}
\newcommand{\GGG}{{\mathcal G}}
\newcommand{\NNN}{{\mathcal N}}
\newcommand{\CCC}{{\mathfrak C}}
\newcommand{\DDD}{{\mathfrak D}}
\newcommand{\grr}{{\mathfrak{gr}}}
\newcommand{\vgrr}{{\mathcal{V}}}
\newcommand{\init}{\mathrm{in}}
\newcommand{\Hilb}{\mathrm{Hilb}}
\newcommand{\Poin}{\mathrm{Poin}}
\newcommand{\galen}[1]{\textcolor{violet}{[#1]}}
\newcommand{\changes}[1]{\textcolor{black}{#1}}
\newtheorem{lemma}{Lemma}
\newtheorem{thm}[lemma]{Theorem}
\newtheorem*{coro}{Corollary}
\newtheorem{prop}[lemma]{Proposition}
\theoremstyle{definition}
\newtheorem{defn}[lemma]{Definition}
\theoremstyle{remark}
\newtheorem{observation}[lemma]{Observation}
\newtheorem{example}[lemma]{Example}
\newtheorem*{non-numbered-example}{Example}
\newtheorem{rem}[lemma]{Remark}
\newtheorem{question}[lemma]{Question}
\newtheorem{remark}[lemma]{Remark}
\title{The Varchenko-Gel'fand Ring of a Cone}
\author{Galen Dorpalen-Barry}
\address{Fakultät für Mathematik, Ruhr-Universit\"at Bochum, Germany}
\email{galen.dorpalen-barry@rub.de}
\thanks{The author was supported by NSF grant DMS-1601961.}
\date{\today}
\keywords{hyperplane, arrangement, cone, poset, Whitney, Poincaré, Varchenko-Gel'fand ring, Koszul}
\subjclass{Primary: 52C35; Secondary: 05Axx, 52C40}
\begin{document}

\maketitle

\begin{abstract}
For a hyperplane arrangement in a real vector space, the coefficients of its Poincaré polynomial have many interpretations.  An interesting one is provided by the Varchenko-Gel'fand ring, which is the ring of functions from the chambers of the arrangement to the integers with pointwise addition and multiplication. Varchenko and Gel'fand gave a simple presentation for this ring, along with a filtration and associated graded ring whose Hilbert series is the Poincaré polynomial.
We generalize these results to cones defined by intersections of halfspaces of some of the hyperplanes and prove a novel result for the Varchenko-Gel'fand ring of an arrangement: when the arrangement is supersolvable the associated graded ring of the arrangement is Koszul.
\end{abstract}


\section{Introduction}

Let $\A$ be a central hyperplane arrangement in $V\cong\R^\ell$, \changes{i.e. a finite collection of distinct, codimension one subspaces of $V$.}
A \emph{chamber} of $\A$ is a connected component of $V\backslash \bigcup_{H\in \A} H$ and we use $\C(\A)$ to denote the set of chambers of $\A$.
This paper concerns the Varchenko-Gel'fand\footnote{In Russian, Varchenko comes first (alphabetically speaking).} ring $VG(\A)$ of $\A$, a ring consisting of maps $\C(\A)\to \Z$ with pointwise addition and multiplication.
This ring was first introduced by Gel'fand and Varchenko \cite{VarchenkoGelfand}, who proved that it has a $\Z$-basis of monomials indexed by \emph{no broken circuit sets} of $\A$ and showed that the \emph{degree filtration} of $VG(\A)$ yields an \emph{associated graded ring} with Hilbert series
completely determined by $\LL(\A)$.
Since then, Cordovil \cite{Cordovil}, Gel'fand-Rybnikov \cite{GelfandRybnikov}, Moseley \cite{moseley}, Proudfoot \cite{proudfoot}, and others
have studied the Varchenko-Gel'fand ring of an arrangement, and various generalizations.

The main result of this paper, Theorem \ref{thm:main theorem}, extends their work to \emph{cones}, which are intersections of open halfspaces defined by some of the hyperplanes of $\A$.
Cones are interesting because they connect central and affine arrangements while generalizing both.
They have been studied by various authors including Aguiar-Mahajan \cite{AguiarMahajan}, Brown \cite{Brown}, Gente \cite{gente}, Zaslavsky \cite{zaslavsky77}, and, in Type A, the author together with Kim and Reiner \cite{DorpalenKimReiner}.
Moreover, cones are \emph{conditional oriented matroids}, as developed by Bandelt, Chepoi, and Knauer \cite{bck2018}.

Our extension of Gel'fand and Varchenko's work utilizes techniques inspired by the theory of Gr\"{o}bner bases.
As a consequence of this approach, we obtain a new result concerning the Varchenko-Gel'fand ring of an arrangement.
This result, Theorem \ref{thm:koszul}, is a Varchenko-Gel'fand analogue of a theorem of Peeva for the {\it Orlik-Solomon algebra} \cite{peeva}.

\subsection{Structure of this Paper}

In the remainder of this introduction, we state Theorems \ref{thm:main theorem} and \ref{thm:koszul} (Section \ref{sec:main results}) and then illustrate the main theorem with an example (Section \ref{sec:extended example}).
Some background on hyperplane arrangements, matroids, the Varchenko-Gel'fand ring, filtrations, and associated graded rings is given in Section \ref{sec:background}.
We prove Theorem \ref{thm:main theorem} in Section \ref{sec:definition of GGG} and we prove Theorem \ref{thm:koszul} in Section \ref{sec:supersolvability}.

\subsection{Statement of Main Results}\label{sec:main results}

Now let $\A = \{H_1,\dots,H_n\}$ be a central hyperplane arrangement in $V\cong\R^\ell$ and $\K$ a cone of $\A$.
An \emph{intersection} of $\A$ is an intersection $X = \bigcap_{i} H_i$ of some of the hyperplanes.
We use $\LL(\A)$ to denote the set of nonempty intersections of $\A$.

We say that $C\in\C(\A)$ is a \emph{chamber of the cone} $\K$ if it lies inside the open, convex set $\K$ and we say that $X\in \LL(\A)$ is an \emph{interior intersection} of $\K$ is an intersection $X\in \LL(\A)$ which cuts through the cone, i.e. for which $X\cap \K$ is nonempty. We denote the set of chambers of the cone by $\C(\K)$ and the set of interior intersections by $\LI(\K)$.
We are interested in the poset structure of $\LI(\K)$ ordered by reverse inclusion and denote its \emph{M\"{o}bius function} by $\mu$.
The poset $\LI(\K)$ is a ranked poset and the rank of each element $X\in\LI(\K)$ is its codimension, denoted $\text{codim}(X)$.

\medskip

The \emph{Varchenko-Gel'fand ring of the cone} $VG(\K)$ is the collection of maps $f: \C(\K)\to \Z$ with pointwise addition and multiplication.
Our main result takes its cue from a result of Varchenko and Gel'fand,
and interprets the \emph{Poincar\'{e} polynomial} of a cone
\[
\Poin(\K,t):= \sum_{X \in \LI(\K)}
|\mu(V,X)|~t^{\text{codim}(X)}
\]
as the \emph{Hilbert series} (defined below) of a certain \emph{graded} ring,
the associated graded ring for a certain filtration on $VG(\K)$.
The Poincar\'{e} polynomial is interesting in its own right. In \cite[Example A]{zaslavsky77}, for example, Zaslavsky showed that
the Poincar\'{e} polynomial evaluated at $1$ is precisely the number of chambers of $\A$ contained in the open cone $\K$, i.e.,
\(
\#\C(\K) = \Poin(\K,1)
\).
Further, several
interpretations for the Poincar\'{e} polynomial of $\K$ were recently given in the case where $\A$ is a Type A reflection arrangement \cite{DorpalenKimReiner}, but none was given for arbitrary hyperplane arrangements.

One can show that the Varchenko-Gel'fand ring $VG(\K)$ of a cone is generated (as a ring) by Heaviside functions associated to the hyperplanes of $\A$.
This endows $VG(\K)$ with a \emph{degree filtration} $\mathcal{F}=\{F_d\}$, where $F_d$ is the $\Z$-span of products of such Heaviside functions of degree at most $d$.
The \emph{associated graded ring} is
\[
\vgrr(\K)
:=\grr_{\mathcal{F}} VG(\K)
= \bigoplus_{d \geq 0} F_d/F_{d-1}
\]
and its Hilbert series is
\[
\Hilb(\vgrr(\K),t) := \sum_{d\geq 0} \text{rank}_\Z (F_d/F_{d-1})~t^d
\]
where $\text{rank}_\Z (F_d/F_{d-1})$ denotes the rank of $F_d/F_{d-1}$ as a $\Z$-module.
Varchenko and Gel'fand proved that when the cone $\K$ is the full arrangement, then $\vgrr(\A)$ is torsion-free and
$\Hilb(\vgrr(\A),t) = \Poin(\A,t)$ \cite{VarchenkoGelfand}. We will show the same equality for cones and, specializing to the full arrangement, obtain a novel proof of Varchenko and Gel'fand's original result.

Our proof comes from giving a generating set $\GGG$ of relations that plays the role of a Gr\"{o}bner basis presentation for $VG(\K)$ and $\vgrr(\K)$ as quotients of $\Z[e_1,\dots, e_n]$; when working over
a field instead of $\Z$, they are Gr\"obner bases.
The relations in $\GGG\subseteq \Z[e_1,\dots, e_n]$ are summarized in Figure \ref{chart}, where we have made the (harmless) assumption that $\K$ is an intersection of \emph{(open) positive halfspaces}, i.e.
\(
\K = \bigcap_{i\in W} H_i^+
\)
where $W\subseteq [n]$ and $H_i^+  := \{\mathbf{x}\in\R^\ell \mid v_i\cdot \mathbf{x} >0\}$ for some choice of normal vector $v_i$ to $H_i$.
For a subset $S$ of $[n]$, we use the notation $e_S:= \prod_{i\in S} e_i$ to describe a squarefree monomial indexed by $S$ in the variables $e_1,\dots, e_n$.
Our main theorem will assert that the elements of $\GGG$ (given in the second column of Figure \ref{chart}) and $\{\init_{\deg}(g)\mid g\in \GGG\}$ (given in the third column) give presentations for $VG(\K)$ and $\vgrr(\K)$, respectively.

The most interesting polynomials in $\GGG$ are defined by \emph{signed circuits}. For each hyperplane $H_i$ of $\A$, fix a normal vector $v_i$.
A pair of disjoint subsets  $D^+,D^-\subseteq [n]$ is a \emph{signed dependency} if there is a linear relation
\(
\sum_{i} \lambda_i v_i = 0
\)
where $i\in D^+$ if $\lambda_i <0$ and $i\in D^-$ if $\lambda_i >0$.
To keep track of signed dependencies, we write them as tuples $D = (D^+,D^-)$ and denote the \emph{underlying (unsigned) set} by $\underline{D} = D^+ \cup D^-$. Our presentations for $VG(\K)$ and $\vgrr(\K)$ concern signed circuits $C = (C^+,C^-)$, which are signed dependencies for which $\underline{C}$ is minimal under inclusion.

In the main theorem, our presentations of $VG(\K)$ and $\vgrr(\K)$ will give a $\Z$-basis for both rings in terms of a certain family of monomials indexed by \emph{$\K$-no broken circuit sets}, which we describe now.
One can \emph{break} a circuit $C = (C^+,C^-)$, by removing the smallest-indexed element $i_0$ of $\underline{C}$. The resulting (unsigned) set  $\underline{C}\backslash\{i_0\}$ is called a \emph{broken circuit}.
A subset of $[n]$ not containing any broken circuits is called a \emph{no broken circuit set}. A no broken circuit set $N$ is, furthermore, a \emph{$\K$-no broken circuit set} (hereafter $\K$-NBC set) if
\(
X=\bigcap_{i\in N} H_i
\)
cuts through the cone $\K$, meaning $X \in \LI(\K)$. We denote the set of $\K$-NBC sets by $NBC(\K)$.

$\K$-NBC sets naturally arise when one studies the Poincar\'{e} polynomial of the cone.
Since every lower interval $[V,X]$ of $\LI(\K)$ is isomorphic to the corresponding lower interval $[V,X]$ in $\LL(\A)$, a theorem of Rota \cite[Section 7]{rota} (c.f. \cite[Theorem 1.1]{sagan})
allows us to compute the M\"{o}bius function of the interval $[V,X]$ in $\LI(\K)$ via the $\K$-NBC sets, i.e., for all $X\in \LI(\K)$
 \[
 \mu(V, X) = (-1)^{\text{codim}(X)} \cdot \#\left\{N\in NBC(\A)~\Big\vert~ \bigcap_{i\in N}H_i = X\right\}.
 \]
As a result, the Poincar\'{e} polynomial of a cone also has an expression in terms of $\K$-NBC sets:
\[
\Poin(\K,t)= \sum_{N \in NBC(\K)}
t^{\# N}
\]
Taking $t=1$, gives
\begin{equation}
\label{cone-nbc-sets-equinumerous-with-cone-chambers}
\#\C(\K)=\#NBC(\K).
\end{equation}

\begin{thm}
\label{thm:main theorem}
Let $\K$ be a cone of an arrangement $\A$ and $\GGG$ the relations from Figure \ref{chart}.
Choose a monomial order $\prec$ on $\Z[e_1,\ldots,e_n]$ which refines the ordering by degree.
Then $VG(\K), \vgrr(\K)$ have
presentations
\[
\begin{aligned}
VG(\K) &\cong \Z[e_1,\ldots,e_n]/(\GGG),\\
 \vgrr(\K) &\cong \Z[e_1,\ldots,e_n]/(\init_{\deg}(\GGG)),
\end{aligned}
\]
and free $\Z$-modules bases given by the images of the
$\K$-NBC monomials $\{e_N\}_{N \in NBC(\K)}$.
In particular,
$$
\Hilb(\vgrr(\K),t)=\sum_{N \in NBC(\K)} t^{\#N} =\Poin(\K,t).
$$
\end{thm}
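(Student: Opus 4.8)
I would realize $VG(\K)$ as $\Z[e_1,\dots,e_n]/(\GGG)$ by identifying $(\GGG)$ with the kernel of the tautological surjection, using the enumerative identity $\#\C(\K)=\#NBC(\K)$ to sidestep a direct Gröbner-basis verification over $\Z$; the graded statement then follows by tracking degrees. Concretely: let $h_i\colon\C(\K)\to\Z$ be the Heaviside function equal to $1$ on chambers in $H_i^+$ and $0$ on chambers in $H_i^-$, and let $\phi\colon\Z[e_1,\dots,e_n]\to VG(\K)$ be the ring homomorphism with $\phi(e_i)=h_i$; since $VG(\K)$ is generated by the $h_i$, the map $\phi$ is surjective. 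One then checks directly that every relation of Figure \ref{chart} lies in $\ker\phi$: the relations $e_i^2-e_i$ vanish because $h_i$ takes values in $\{0,1\}$; the relations $e_i-1$ with $i\in W$ vanish because $h_i\equiv 1$ on $\C(\K)$ by the standing assumption $\K=\bigcap_{i\in W}H_i^+$; and each signed-circuit relation vanishes because a signed circuit $(C^+,C^-)$ with $\sum_i\lambda_i v_i=0$ admits no chamber lying on the positive side of every $H_i$ with $i\in C^+$ and the negative side of every $H_i$ with $i\in C^-$, since $\sum_i\lambda_i(v_i\cdot x)$ would then be a sum of strictly negative numbers equal to zero. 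Hence $\phi$ descends to a surjection $\bar\phi\colon\Z[e_1,\dots,e_n]/(\GGG)\twoheadrightarrow VG(\K)$, and it remains to show the images of the $\K$-NBC monomials $\{e_N\}_{N\in NBC(\K)}$ form a $\Z$-basis of the source.

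\emph{The $\K$-NBC monomials span (the crux).} Each $g\in\GGG$ has a well-defined $\prec$-leading monomial, and modulo $(\GGG)$ any monomial divisible by one of these may be rewritten as a $\Z$-linear combination of strictly $\prec$-smaller monomials; because $\prec$ refines degree and well-orders monomials, this rewriting terminates, so every $e_S$ reduces to a $\Z$-combination of monomials divisible by no $\prec$-leading monomial of a relation in $\GGG$. The step then reduces to the identification
\[
\{\,\text{monomials divisible by no }\prec\text{-leading monomial of a relation in }\GGG\,\}=\{\,e_N:N\in NBC(\K)\,\}.
\]
The relations $e_i^2-e_i$ force squarefreeness; the relations $e_i-1$ ($i\in W$) force the exponent set to miss $W$; the circuit relations force it to contain no broken circuit. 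The genuinely new point for cones — and the step I expect to be the main obstacle — is the remaining implication: a squarefree $S$ with $S\cap W=\emptyset$ and no broken circuit but with $X_S:=\bigcap_{i\in S}H_i$ failing to cut through $\K$ must still be reducible. Here I would invoke a theorem of the alternative: since the linear subspace $X_S$ is disjoint from the open cone $\K$, separating them yields a nonzero relation $\sum_{i\in S}\mu_i v_i+\sum_{j\in W}\nu_j v_j=0$ with all $\nu_j\ge 0$; extracting from it a signed circuit $C$ whose negative part lies in $S$ and whose positive part meets $W$ (using that a circuit all of whose nonzero coefficients have one sign and whose support lies in $W$ would force $\K=\emptyset$), the circuit relation for $C$ from Figure \ref{chart}, reduced modulo the relations $e_j-1$ ($j\in W$), becomes a relation in $(\GGG)$ whose $\prec$-leading monomial is a nonempty squarefree monomial dividing $e_S$ — so $e_S$ reduces. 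When $\K=V$ every intersection cuts through $V$, so this case is empty and one recovers Varchenko and Gel'fand's original spanning set.

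\emph{From spanning to basis, and the graded statement.} By \eqref{cone-nbc-sets-equinumerous-with-cone-chambers}, $\#NBC(\K)=\#\C(\K)$, so $VG(\K)$ is a free $\Z$-module of rank $\#NBC(\K)$. The previous step exhibits $\Z[e_1,\dots,e_n]/(\GGG)$ as a module generated by the $\#NBC(\K)$ elements $e_N$ surjecting onto $VG(\K)$ via $\bar\phi$; since a surjective endomorphism of a finitely generated module over a commutative ring is an isomorphism, $\bar\phi$ must be an isomorphism carrying the $e_N$ to a $\Z$-basis, and in particular $(\GGG)=\ker\phi$. For the associated graded ring, note that the rewriting above never raises the degree, so the images of $\{e_N:\#N\le d\}$ span $F_d$; combined with the $\Z$-independence just established, $F_d$ is free on $\{e_N:\#N\le d\}$ and hence $F_d/F_{d-1}$ is free on $\{e_N:\#N=d\}$. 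Passing to associated graded rings replaces each $g\in\GGG$ by its degree-initial form $\init_{\deg}(g)$, which maps to $0$ in $\vgrr(\K)$ because $g-\init_{\deg}(g)$ has strictly smaller degree; and the same combinatorial identification (applied to the $\prec$-leading monomials of the homogeneous relations $\init_{\deg}(g)$) shows the degree-$d$ part of $\Z[e_1,\dots,e_n]/(\init_{\deg}(\GGG))$ is generated by the $\#\{N\in NBC(\K):\#N=d\}$ monomials $e_N$ with $\#N=d$. Thus the induced surjection $\Z[e_1,\dots,e_n]/(\init_{\deg}(\GGG))\twoheadrightarrow\vgrr(\K)$ is, in each degree, a surjection of finitely generated $\Z$-modules onto a free module of matching rank, hence an isomorphism, and
\[
\Hilb(\vgrr(\K),t)=\sum_{N\in NBC(\K)}t^{\#N}=\Poin(\K,t),
\]
the last equality being the expansion of the Poincaré polynomial over $\K$-NBC sets derived before the theorem.
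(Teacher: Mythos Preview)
Your proposal is correct and follows essentially the same approach as the paper: the paper isolates the counting/module-theoretic step as a general lemma (Lemma~\ref{lem:integer-grobner}), checks $\GGG\subset\ker\varphi$ (Proposition~\ref{prop:containment}), and then shows the $\init_\prec(\GGG)$-standard monomials lie among the $\K$-NBC monomials (Proposition~\ref{prop:vg-assumption-iii}), with your separation/theorem-of-the-alternative argument in the crux being exactly the paper's Reduction~4 phrased linear-algebraically rather than via oriented-matroid duality and covector elimination (the paper itself notes the equivalence with Gordan's theorem). One small clarification: the reason your extracted conformal circuit must meet $W$ is simply that $S$, being NBC, is independent---your parenthetical about $\K=\emptyset$ handles the complementary concern that the circuit not lie entirely in $W$, so that $e_{\underline{C}\setminus W}$ is nonempty.
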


\noindent \changes{Recall that $\K = \bigcap_{i\in W} H_i^+$ for some (possibly-redundant) $W\subseteq[n]$.
The following corollary is an immediate consequence of Theorem \ref{thm:main theorem}.}

\begin{coro}
$VG(\K) \cong \Z[e_1,\dots e_n]/I_\K$ where $I_\K$ is generated by
  \begin{itemize}
  \item {\sf (Idempotent)} $e_{i}^2 - e_i$ for $i\in[n]$,
  \item {\sf(Unit)} $e_i-1$ for $i\in W$,
  \item {\sf (Circuit)} $\displaystyle (e_{C^+})  \prod_{j\in C^-} (e_j-1) - (e_{C^-})\prod_{i\in C^+} (e_i-1)$ for signed circuits $C = (C^+, C^-)$.
\end{itemize}
\end{coro}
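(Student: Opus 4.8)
The plan is to deduce the corollary directly from Theorem~\ref{thm:main theorem} by checking that the ideal $I_\K$ in the statement equals the ideal $(\GGG)$ generated by the relations of Figure~\ref{chart}. Since Theorem~\ref{thm:main theorem} already gives $VG(\K)\cong\Z[e_1,\dots,e_n]/(\GGG)$, the equality $(\GGG)=I_\K$ of ideals of $\Z[e_1,\dots,e_n]$ immediately yields the presentation $VG(\K)\cong\Z[e_1,\dots,e_n]/I_\K$.

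The inclusion $(\GGG)\subseteq I_\K$ I would obtain by inspecting Figure~\ref{chart}: every generator occurring in $\GGG$ already appears among the three families \textsf{(Idempotent)}, \textsf{(Unit)}, \textsf{(Circuit)} listed in the corollary. (If Figure~\ref{chart} records circuit polynomials only for the signed circuits $C$ with $\underline{C}\in\LI(\K)$, note that these form a subset of all signed circuits, so the inclusion still holds.)

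For the reverse inclusion $I_\K\subseteq(\GGG)$ I would show that each listed generator of $I_\K$ lies in the kernel of the quotient map $\phi\colon\Z[e_1,\dots,e_n]\twoheadrightarrow VG(\K)$; by Theorem~\ref{thm:main theorem} this kernel is exactly $(\GGG)$. Here $\phi(e_i)$ is the Heaviside function sending a chamber of $\K$ to $1$ if that chamber lies in $H_i^+$ and to $0$ otherwise. These functions are $\{0,1\}$-valued, so $\phi(e_i^2-e_i)=0$. For $i\in W$ we have $\K\subseteq H_i^+$ (even when the chosen $W$ is redundant), so every chamber of $\K$ lies in $H_i^+$, giving $\phi(e_i)=1$ and $\phi(e_i-1)=0$. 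Finally, let $C=(C^+,C^-)$ be a signed circuit arising from a relation $\sum_{i\in\underline{C}}\lambda_iv_i=0$ with $\lambda_i<0$ for $i\in C^+$ and $\lambda_i>0$ for $i\in C^-$. On a given chamber, the monomial $e_{C^+}\prod_{j\in C^-}(e_j-1)$ takes a nonzero value exactly when that chamber lies on the positive side of every $H_i$ with $i\in C^+$ and on the negative side of every $H_j$ with $j\in C^-$, while $e_{C^-}\prod_{i\in C^+}(e_i-1)$ is nonzero exactly in the sign-reversed situation; but neither of these two sign patterns on $\underline{C}$ can be realized by a point $\mathbf{x}\in V$, since realizing it would contradict $\sum_{i\in\underline{C}}\lambda_i(v_i\cdot\mathbf{x})=0$. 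Hence both monomials vanish identically on $\C(\K)$, $\phi$ kills the circuit polynomial, and $I_\K\subseteq\ker\phi=(\GGG)$. Combining the two inclusions gives $(\GGG)=I_\K$, which proves the corollary. I do not expect a genuine obstacle here: the only substantive input is the standard oriented-matroid fact that the two sign patterns of a circuit are unrealizable, which is precisely what makes the Varchenko-Gel'fand circuit relations hold as identities of functions; everything else is bookkeeping against Figure~\ref{chart}. A proof not routed through Theorem~\ref{thm:main theorem} would instead have to redo the Gr\"obner-style reduction to the $\K$-NBC monomials, which is why the statement is recorded as a corollary.
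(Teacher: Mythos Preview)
Your strategy---show $(\GGG)=I_\K$ by establishing both inclusions, routing $I_\K\subseteq(\GGG)$ through $\ker\varphi$---is exactly how the paper's ``immediate consequence'' unpacks, and your verification that each generator of $I_\K$ lies in $\ker\varphi$ reproduces the content of Proposition~\ref{prop:containment}.

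There is one real gap, in the inclusion $(\GGG)\subseteq I_\K$. Figure~\ref{chart} contains a fourth family of generators you did not anticipate: the \textsf{Cone Circuit} relations $e_{C^+\setminus W}\prod_{j\in C^-}(e_j-1)$, taken over signed circuits $C$ with $\emptyset\neq W\cap C^+=W\cap\underline{C}$ (and the analogous relations with $C^+,C^-$ swapped). These are not literally among the generators of $I_\K$, so your claim that every element of $\GGG$ already appears in the corollary's list is false. The repair is short. With $f_+=e_{C^+}\prod_{j\in C^-}(e_j-1)$ and $f_-=e_{C^-}\prod_{i\in C^+}(e_i-1)$ as in \eqref{two-sides-of-circuit-relation}, the \textsf{Circuit} generator $f_+-f_-$ lies in $I_\K$; since $W\cap C^+\neq\emptyset$, the product $f_-$ is divisible by some $e_i-1$ with $i\in W$, so $f_-\in I_\K$ and hence $f_+\in I_\K$. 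Finally, the \textsf{Cone Circuit} relation differs from $f_+$ by $(1-e_{C^+\cap W})\cdot e_{C^+\setminus W}\prod_{j\in C^-}(e_j-1)$, and $1-e_{C^+\cap W}$ lies in the ideal generated by the \textsf{Unit} relations $e_i-1$ for $i\in W$. With this patch your argument is complete.
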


Theorem~\ref{thm:main theorem} has an interesting consequence when $\A$ is a \emph{supersolvable arrangement}: we obtain a Varchenko-Gel'fand ring analogue of a result proven by Peeva for the \emph{Orlik-Solomon algebra} of $\A$. In order to state this result, we work with coefficients in a field $\kk$,
and consider the ring $VG_{\kk}(\A)$ of maps from the chambers of $\A$ to a field $\kk$, denoting the associated graded ring by $\vgrr_{\kk}(\A)$.

\begin{thm}\label{thm:koszul}
  If $\A$ is a supersolvable arrangement, then $\vgrr_{\kk}(\A)$ is Koszul.
\end{thm}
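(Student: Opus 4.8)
\emph{Proof proposal.} The plan is to prove the stronger statement that $\vgrr_{\kk}(\A)$ is \emph{$G$-quadratic}, i.e.\ has a presentation whose defining ideal admits a quadratic Gr\"obner basis; this suffices, since a $G$-quadratic algebra over a field is Koszul (its initial algebra is a quotient of a polynomial ring by a quadratic monomial ideal, hence Koszul by Fr\"oberg's theorem, and Koszulness passes from the initial algebra back to the algebra via the Gr\"obner degeneration). So it is enough to produce a monomial order $\prec$ on $\kk[e_1,\dots,e_n]$ for which the defining ideal $J:=(\init_{\deg}(\GGG))$ of $\vgrr_{\kk}(\A)$ has an initial ideal generated in degree $2$. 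I will extract the shape of $\init_{\prec}(J)$ from Theorem~\ref{thm:main theorem} and then read off degree-$2$ generation from the combinatorics of supersolvability.

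First I would fix the data. Using supersolvability of $\A$, label the hyperplanes $H_1,\dots,H_n$ compatibly with a maximal chain of modular flats of $\LL(\A)$, and let $\prec$ refine total degree, tie-broken so that the leading term of $e_i^2-e_i$ is $e_i^2$ and, among the equal-degree monomials $\{e_{\underline C\setminus i}\}_{i\in\underline C}$ appearing in the degree-initial form $\sum_{i\in\underline C}\varepsilon_i\,e_{\underline C\setminus i}$ of the circuit relation of a signed circuit $C$, the leading one is the broken-circuit monomial $e_{\underline C\setminus\min\underline C}$ (a suitable degree-refined lexicographic order does this). In the full-arrangement case $\K=V$ of Theorem~\ref{thm:main theorem} (there are then no unit relations), the squarefree monomials avoiding every broken circuit form a $\kk$-basis of $\vgrr_{\kk}(\A)$. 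The standard monomials of $\prec$ also form a $\kk$-basis; they are squarefree because $e_i^2\in\init_\prec(J)$, and they avoid every broken circuit because $e_{\underline C\setminus\min\underline C}\in\init_\prec(J)$ for every circuit $C$. A basis contained in, and of the same cardinality in each degree as, the set of broken-circuit-free squarefree monomials must equal it, so the two bases coincide, and therefore
\[
\init_{\prec}(J)\;=\;\bigl(e_i^2\,:\,i\in[n]\bigr)\;+\;\bigl(e_B\,:\,B\text{ a broken circuit of }\A\bigr),
\]
the second summand being the Stanley--Reisner ideal of the broken-circuit complex of $\A$.

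It then remains to invoke the combinatorial fact that for a supersolvable matroid ordered by a maximal chain of modular flats, every broken circuit contains a broken circuit of size $2$; this is precisely the combinatorial input behind Peeva's theorem \cite{peeva} that the Orlik--Solomon ideal of a supersolvable arrangement has a quadratic Gr\"obner basis. Hence the broken-circuit Stanley--Reisner ideal is generated in degree $2$, and as $(e_i^2:i\in[n])$ plainly is, so is $\init_\prec(J)$; thus $J$ has a quadratic Gr\"obner basis with respect to $\prec$, and $\vgrr_{\kk}(\A)$ is Koszul. Concretely, the degree-$\le 2$ members of $\GGG$---the idempotent relations and the circuit relations of $3$-element circuits---already constitute such a Gr\"obner basis, since their leading terms are the $e_i^2$ together with the size-$2$ broken-circuit monomials, which by supersolvability generate $\init_\prec(J)$.

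The step I expect to be the main obstacle is the interface between Theorem~\ref{thm:main theorem} and this combinatorics: one must choose the monomial order, equivalently the labelling of the hyperplanes, so that the leading terms of the circuit relations are \emph{exactly} the broken-circuit monomials---so that Theorem~\ref{thm:main theorem} genuinely pins down $\init_\prec(J)$---while simultaneously that labelling is induced by a modular chain, which is what forces the broken-circuit Stanley--Reisner ideal to be quadratic. The presence of the extra quadratic generators $e_i^2$ should be harmless: a sum of two monomial ideals each generated in degree $2$ is generated in degree $2$, so no higher-degree Gr\"obner-basis elements are needed and the Koszul conclusion is unaffected.
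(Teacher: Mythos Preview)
Your proposal is correct and follows essentially the same route as the paper: use Theorem~\ref{thm:main theorem} (over a field) to identify $\init_\prec(J)$ as the ideal generated by the $e_i^2$ together with the broken-circuit monomials, invoke the Bj\"orner--Ziegler result that a supersolvable ordering forces every broken circuit to contain one of size two, and conclude that $J$ has a quadratic Gr\"obner basis, hence $\vgrr_\kk(\A)$ is Koszul. The only cosmetic difference is that the paper appeals directly to Remark~\ref{rem:integer-grobner-field} to know $\init_{\deg}(\GGG)$ is already a Gr\"obner basis, whereas you re-derive the shape of $\init_\prec(J)$ by the basis-comparison argument; both lead to the same conclusion.
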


This result uses Bj\"{o}rner-Ziegler's characterization of supersolvable arrangements via broken circuit sets \cite{bjorner-ziegler}, so one might hope that it extends to cones of supersolvable arrangements.
Unfortunately, there are cones of supersolvable arrangements for which $\vgrr_{\kk}(\K)$ is not Koszul (we provide such an example in Section~\ref{sec:supersolvability}).

\begin{figure}
\resizebox{\linewidth}{!}{
$$
\begin{tabular}{|c|c|c|c|c|}\hline
& $g \in \GGG$ & $\init_{\deg}(g)$ & $\init_\prec(g)$ \\ \hline\hline
& \text{For all }$i \in [n],$ & & \\
{\sf Idempotent}  & & & \\
& $e_i^2 -e_i$&  $e_i^2$ &  $e_i^2$ \\
&  & & \\ \hline
& \text{For all }$i \in W,$ & & \\
& & & \\
{\sf Unit} & $e_i-1$&  $e_i$ &  $e_i$ \\
 & & & \\ \hline
& $\text{For all circuits }C=(C^+,C^-)$ & & \\
& $\text{ with }\emptyset \neq W \cap C^+=W \cap \underline{C},$ & & \\
{\sf Cone}& & & \\
{\sf Circuit} & $e_{C^+\setminus W} \cdot \displaystyle\prod_{j \in C^-} (e_j-1)$&  $e_{\underline{C} \setminus W}$ &  $e_{\underline{C} \setminus W}$  \\
 & & & \\
& $(\text{similarly if }\emptyset \neq W \cap C^-=W \cap \underline{C})$ & & \\ \hline
& $\text{For signed circuits }C=(C^+, C^-)$ & & \\
& $\text{ with }\emptyset = W \cap \underline{C},$ & & \\
 & & & \\
{\sf Circuit} & $(e_{C^+}) \displaystyle\prod_{j \in C^-} (e_j-1)
-(e_{C^-}) \displaystyle \prod_{j \in C^+} (e_j-1)$
&  $+ \displaystyle \sum_{i \in C^+} e_{\underline{C} \setminus \{i\}}
- \displaystyle \sum_{j \in C^-} e_{\underline{C} \setminus \{j\}}$
&  $e_{\underline{C} \setminus \{i_0\}}$  \\
& & &  $\text{ where }i_0:=\min_\prec(\underline{C})$\\
&  & & \\ \hline
\end{tabular}
$$
}
\caption{The relations $g$ in $\GGG$, along with their degree-initial form
$\init_{\deg}(g)$ and their initial term $\init_{\prec}(g)$ for
any monomial order $\prec$ that satisfies $e_1\prec\cdots\prec e_n$.}\label{chart}
\end{figure}

\subsection{An extended example}\label{sec:extended example}
In this section, we give an extended example illustrating Theorem \ref{thm:main theorem}.
Consider the cone $\K$ of a central arrangement in $\R^3$ of which an affine slice is drawn below on the left.
We can compute the Poincar\'{e} polynomial of the cone from $\LI(\K)$ (below, on the right).
It is $\Poin(\K,t) = 1+3t+t^2$.
\begin{center}
\begin {tikzpicture}[scale=.5]

\node[invisivertex] (A1) at (-1,1){$H_4$};
\node[invisivertex] (A2) at (3,-3){};
\node[invisivertex] (B1) at (.5,-3){};
\node[invisivertex] (B2) at (.75,3){$H_1$};
\node[invisivertex] (C1) at (2.3,-3.4){$H_2$};
\node[invisivertex] (C2) at (-.25,2.75){};
\node[invisivertex] (D1) at (0,-3){};
\node[invisivertex] (D2) at (2.5,3.4){$H_3$};
\node[invisivertex] (E1) at (-1,-1){$H_5$};
\node[invisivertex] (E2) at (3,3){};

\path[-] (A1) edge [bend left =0,color=teal, dashed] node[above] {} (A2);
\path[-] (E1) edge [bend left =0,color=teal, dashed] node[above] {} (E2);
\path[-] (B1) edge [bend left =0] node[above] {} (B2);
\path[-] (C1) edge [bend left =0] node[above] {} (C2);
\path[-] (D1) edge [bend left =0] node[above] {} (D2);

\fill[teal!40,nearly transparent] (0,0) -- (3,3) -- (3,-3) -- cycle;
\end{tikzpicture}
\hspace{2cm}
\begin {tikzpicture}[scale=.5]

\node[invisivertex] (0) at (3,0){$\hat{0}$};
\node[invisivertex] (H1) at (0,2){$H_{1}$};
\node[invisivertex] (H2) at (3,2){$H_{2}$};
\node[invisivertex] (H3) at (6,2){$H_{3}$};
\node[invisivertex] (X23) at (4.5,4){$H_{2}\cap H_3$};

\path[-] (0) edge [bend left =0] node[above] {} (H2);
\path[-] (0) edge [bend left =0] node[above] {} (H3);
\path[-] (0) edge [bend left =0] node[above] {} (H1);

\path[-] (H2) edge [bend left =0] node[above] {} (X23);
\path[-] (H3) edge [bend left =0] node[above] {} (X23);

\end{tikzpicture}
\end{center}

\begin{figure}
\begin{tabular}{|c|c|c|c|}
\hline
& $g\in\GGG$ & $\init_{\deg}(g)$ & $\init_{\prec}(g)$ \\
\hline
\hline
&&&\\
& For all $i\in [5]$ & &\\
{\sf (Idempotent)} & $e_i^2 - e_i$ & $e_i^2$ & $e_i^2$  \\
&&&\\
\hline
&&&\\
{\sf (Unit)} & $e_4 - 1,$ & $e_4$, & $e_4$,\\
& $e_5 - 1$ & $e_5$ &  $e_5$ \\
&&&\\
\hline
&&&\\
{\sf (Circuit)} & (none) & (none) & (none) \\
&&&\\
\hline
&&&\\
{\sf (Cone-Circuit)} & $e_1e_2 - e_2$, & $e_1e_2$, & $e_1e_2$,\\
& $e_1e_2e_3 - e_2e_3$, & $e_1e_2e_3$, & $e_1e_2e_3$,\\
& $e_1e_3 - e_3$ & $e_1e_3$ & $e_1e_3$\\
&&&\\
\hline
\end{tabular}
\caption{The elements of $\GGG$ for some choice of orientation in the extended example in Section \ref{sec:extended example}.
We omit the the redundant {\sf Cone-Circuit} relations for which the opposite orientation is already given. }
\label{fig:extended-example}
\end{figure}

\noindent Figure \ref{fig:extended-example} shows the relations $\GGG$ for some choice of orientation of $\A$ (we omit the the redundant {\sf Cone-Circuit} relations for which the opposite orientation is already given).
Our main theorem says that
$VG(\K)\cong \Z[e_1,e_2,e_3,e_4,e_5]/(\GGG)$
and the associated graded ring has presentation
$\vgrr(\K)\cong \Z[e_1,e_2,e_3,e_4,e_5]/(\init_{\deg}\GGG)$.
Furthermore
\[
\vgrr(\K) \cong \Z\cdot\{1\}\oplus \Z\cdot\{x_1,x_2,x_3\} \oplus \Z\cdot\{x_2x_3\}\,,
\]
which means that $\Hilb(\vgrr(\K),t) = 1+3t+t^2$.

\section{Background}\label{sec:background}

\subsection{Hyperplanes, Cones and Linear Algebra}

Let $\A = \{H_1,\dots, H_n\}$ be a central hyperplane arrangement with $n$ distinct hyperplanes $H_i = \{\mathbf{x}\in\R^\ell \mid v_i \cdot \mathbf{x} = 0\}$ for normal vectors $\{v_i\}_{i\in[n]}$. A \emph{chamber} of $\A$ is an open, connected component of
$V\backslash \bigcup_{H\in\A}H$ and an
\emph{intersection} $X$ of $\A$ is the subspace of $V$ defined by intersecting some of the hyperplanes of $\A$. We denote the set of chambers and set of intersections of $\A$ by $\C(\A)$ and $\LL(\A)$, respectively. Note that $\LL(\A)$ always contains $V$, the intersection of none of the hyperplanes.
When ordered by reverse inclusion, $\LL(\A)$ is ranked by codimension and satisfies these two conditions that define a \emph{geometric lattice} \cite[Definition 3.9]{stanley-hyperplanes}:
\begin{itemize}
    \item (Upper Semi-Modular) For all $X,Y\in \LL(\A)$, the codimensions of $X$ and $Y$ satisfy
    \[
    \text{codim}(X) + \text{codim}(Y) \geq \text{codim}(X\vee Y) +  \text{codim}(X \wedge Y)
    \]
    where $X \vee Y$ is the intersection $X \cap Y$, and $X\wedge Y$ denotes the lowest-dimensional subspace $Z\in \LL(\A)$ containing both $X$ and $Y$.
    \item (Atomic) Every $X\in \LL(\A)$ is an intersection of some of the hyperplanes of $\A$.
\end{itemize}

Our choice of normal vectors $v_1,\dots,v_n$ induces an orientation of $\A$, which we use to define a pair of halfspaces $H^+$ and $H^-$ for every hyperplane $H\in\A$:
\begin{align*}
    H^+ & = \{\mathbf{x}\in\R\mid v_i\cdot \mathbf{x} >0\}\\
    H^- & = \{\mathbf{x}\in\R\mid v_i\cdot \mathbf{x}<0\}.
\end{align*}
With this notation, each hyperplane yields a decomposition of $V$ into three sets: the positive halfspace $H^+$, the negative halfspace $H^-$, and the hyperplane itself. To emphasize this relationship, we sometimes use a superscript $0$ to denote the hyperplane itself, thus decomposing $V$ into $H^+ \sqcup H^- \sqcup H^0$.

A \emph{cone} $\K$ of an arrangement $\A$ is an intersection of halfspaces defined by some of the hyperplanes of $\A$. To simplify notation, we assume that $\A$ is oriented so that $\K$ is an intersection of positive halfspaces.
\changes{We use $W \subseteq [n]$ to denote the set of indices of a (potentially redundant) set of defining hyperplanes of $\K$, chosen among the hyperplanes of $\A$.
We call $W$ the set of \emph{walls} of the cone.
Note that there may be several choices of walls which define the same cone $\K$ (when $\K$ is viewed as a subset of the vector space).
Our main theorem will show that these different choices of walls for $\K$ produce the same Varchenko-Gelfand ring.}
%

The notions of chambers and intersections naturally extend to cones: a chamber of a cone $\K$ is a chamber of $\A$ contained in the open set $\K$
and an intersection $X$ of $\K$ is an intersection of $\A$ with $X\cap\K\not=\emptyset$. We denote the set of chambers and intersections of $\K$ by $\C(\K)$ and $\LI(\K)$, respectively.
Although $\LI(\K)$ is \emph{not} a geometric lattice when ordered by reverse inclusion, every lower interval, $[V,X]$ for $X\in\LI(\K)$, \emph{is} a geometric lattice.

In the introduction, we defined the  Poincar\'{e} polynomial of a cone and showed that it can be computed from the interior intersection poset by
\[
{\sf Poin}(\K,t) =
\sum_{X\in\LI(\K)} |\mu(V,X)|~t^{rk(X)}.
\]
We noted, furthermore, that a theorem of Zaslavsky relates ${\sf Poin}(\K,t)$ to the number of chambers in a cone. Combining Zaslavsky's theorem with the M\"{o}bius function definition of the Poincar\'{e} polynomial (above) gives
\[
\#\C(\K) = \sum_{X\in \LI(\K)} |\mu(V,X)|.
\]
In the introduction, we gave another interpretation of the Poincar\'{e} polynomial in terms of the $\K$-NBC sets, which arise from thinking about the oriented matroid of the arrangement. We will introduce these sets in Section \ref{sec:background:oriented matroids} and here give an example of the poset-interpretation of $\Poin(\K,t)$.

\begin{example}\label{example:cones}
Consider the arrangement in $\R^2$ below on the left with the given orientation. The cone $\K = H_1^+$ with $W=\{1\}$ is the shaded region below on the right.

    \begin{center}
      \begin {tikzpicture}[scale=.6]

      \node[invisivertex] (A1) at (-2,-2){};
      \node[invisivertex] (A2) at (2,2){$H_1$};
      \node[invisivertex] (B1) at (-2,2){};
      \node[invisivertex] (B2) at (2,-2){$H_3$};
      \node[invisivertex] (C1) at (-3,0){};
      \node[invisivertex] (C2) at (3,0){$H_2$};

    \node[invisivertex] (OA1) at (1,1){};
      \node[invisivertex] (OA2) at (1.5,.5){};
      \node[invisivertex] (OB1) at (1,-1){};
      \node[invisivertex] (OB2) at (.5,-1.5){};
      \node[invisivertex] (OC1) at (1.5,0){};
      \node[invisivertex] (OC2) at (1.5,-.5){};

      \path[-] (A1) edge [bend left =0] node[above] {} (A2);
      \path[-] (B1) edge [bend left =0] node[above] {} (B2);
      \path[-] (C1) edge [bend left =0] node[above] {} (C2);

  \path[->] (OA1) edge [color=teal, thick] node[above] {} (OA2);
      \path[->] (OB1) edge[color=teal, thick] node[above] {} (OB2);
      \path[->] (OC1) edge [color=teal, thick] node[above] {} (OC2);

      \end{tikzpicture}
\hspace{2cm}
    \begin {tikzpicture}[scale=.6]

      \node[invisivertex] (A1) at (-2,-2){};
      \node[invisivertex] (A2) at (2,2){$H_1$};
      \node[invisivertex] (B1) at (-2,2){};
      \node[invisivertex] (B2) at (2,-2){$H_3$};
      \node[invisivertex] (C1) at (-3,0){};
      \node[invisivertex] (C2) at (3,0){$H_2$};

      \node[invisivertex] (OA1) at (1,1){};
      \node[invisivertex] (OA2) at (1.5,.5){};
      \node[invisivertex] (OB1) at (1,-1){};
      \node[invisivertex] (OB2) at (.5,-1.5){};
      \node[invisivertex] (OC1) at (1.5,0){};
      \node[invisivertex] (OC2) at (1.5,-.5){};

      \path[-] (A1) edge [bend left =0,dashed, teal] node[above] {} (A2);
      \path[-] (B1) edge [bend left =0] node[above] {} (B2);
      \path[-] (C1) edge [bend left =0] node[above] {} (C2);

     \path[->] (OA1) edge [color=teal, thick] node[above] {} (OA2);
      \path[->] (OB1) edge[color=teal, thick] node[above] {} (OB2);
      \path[->] (OC1) edge [color=teal, thick] node[above] {} (OC2);

      \fill[teal!60,nearly transparent] (-2,-2) -- (2,2) -- (3,0) -- (2,-2) -- cycle;
      \end{tikzpicture}
\end{center}

%
%
%
%
%
%
%
%
The Hasse diagrams of $\LL(\A)$ (left) and $\LI(\K)$ (right) are below, together with $\mu(V, X)$ for each $X$ (circled value).
\begin{center}
\begin {tikzpicture}[scale=.8]

\node[invisivertex] (0) at (3,0){$V$};
\node[moebius] (M0) at (2,0){+1};
\node[invisivertex] (H1) at (1,1.5){$H_{1}$};
\node[moebius] (M1) at (0.25,1.5){-1};
\node[invisivertex] (H2) at (3,1.5){$H_{2}$};
\node[moebius] (M2) at (2.25,1.5){-1};
\node[invisivertex] (H3) at (5,1.5){$H_{3}$};
\node[moebius] (M2) at (4.25,1.5){-1};
\node[invisivertex] (X23) at (3,3){$H_1\cap H_{2}\cap H_3$};
\node[moebius] (M123) at (1,3){+2};

\path[-] (0) edge [bend left =0] node[above] {} (H2);
\path[-] (0) edge [bend left =0] node[above] {} (H3);
\path[-] (0) edge [bend left =0] node[above] {} (H1);

\path[-] (H1) edge [bend left =0] node[above] {} (X23);
\path[-] (H2) edge [bend left =0] node[above] {} (X23);
\path[-] (H3) edge [bend left =0] node[above] {} (X23);
\end{tikzpicture}
\hspace{2cm}
\begin {tikzpicture}[scale=.8]

\node[invisivertex] (0) at (3,0){$V$};
\node[moebius] (M0) at (2.25,0){+1};
\node[invisivertex] (H2) at (3,1.5){$H_{2}$};
\node[moebius] (M2) at (2.25,1.5){-1};
\node[invisivertex] (H3) at (5,1.5){$H_{3}$};
\node[moebius] (M2) at (4.25,1.5){-1};

\path[-] (0) edge [bend left =0] node[above] {} (H2);
\path[-] (0) edge [bend left =0] node[above] {} (H3);

\end{tikzpicture}
\end{center}
The associated Poincar\'{e} polynomials are ${\sf Poin}(\A,t) = 1+3t+2t^2$ and ${\sf Poin}(\K,t) = 1+2t$, respectively. It is easy to see from the pictures in Example \ref{example:cones} that the arrangement has $1+3+2 = 6$ chambers and that the cone has $1+2=3$ chambers.
\end{example}

\subsection{Oriented Matroids}\label{sec:background:oriented matroids}

The collection of normal vectors $\{v_1,\dots,v_n\}$ of $\A$ naturally gives rise to an \emph{oriented matroid}.
The theory of (oriented) matroids
arising from hyperplane arrangements is well-studied and there are many excellent sources on this topic including \cite{BjornerEtAl}, \cite[Section 2.1]{OrlikTerao}, and \cite[Lecture 3]{stanley-hyperplanes}. We briefly review some basics but refer the reader to the preceding sources for a more detailed discussion.

Let $E$ be a finite set. A \emph{signed set} $D = (D^+,D^-)$ of $E$ is a disjoint, ordered pair of subsets $D^+,D^-\subseteq E$.
For a signed subset $D$ of $E$ and $e\in E$ and $D$, define
\[
D_e := \begin{cases}
+ & \text{if }e\in D^+\\
- & \text{if }e\in D^-\\
0 & \text{else}.
\end{cases}
\]
The collection $\underline{D}:=\{e\in E: D_e \not=0\}$ is called the \emph{(unsigned) support set} of $E$.  Each signed set $D$ has an ``opposite" signed set $-D$ with the same (unsigned) support set but
\(
(-D)_e = -(D_e)
\)
for all $e\in\underline{D}$. For an arbitrary pair $C, D$ of signed sets, we use their \emph{separating set} to keep track of the places where their signs are opposite, i.e. if $C,D$ are signed sets of $E$ then the separating set of $C$ and $D$ is
\[
S(C,D) := \{e\in E\mid C_e = - D_e \not=0\}.
\]
Its easy to see that the separating set of $D$ and $-D$ is $\underline{D}$.

Finally we define the \emph{composition product} of two signed sets $C,D$ with the same ground set $E$. The composition of $C$ with $D$ is the signed set $C\circ D$ where
\[
(C\circ D)_e =
\begin{cases}
C_e &\text{if } C_e \not=0 \\
D_e &\text{else}
\end{cases}
\qquad \text{for }e\in E.
\]

\begin{defn}
Let $E$ be a finite set and $\DDD$ a collection of signed subsets of $E$. The pair $\DDD$ is the set of vectors of an \emph{oriented matroid} on $E$
if $\DDD$ satisfies the \emph{vector axioms}:
\begin{itemize}
  \item[{\sf V0}.] $\mathbb{0}\in\DDD$
  \item[{\sf V1}.] If $D\in\DDD$, then $-D\in\DDD$.
  \item[{\sf V2}.] If $C,D\in\DDD$, then $C\circ D\in\DDD$
  \item[{\sf V3}.] If $C,D\in\DDD$ and $e\in S(C,D)$ then there exists $F\in\DDD$ with
  \begin{align*}
    F_e & = 0\\
    F_f & = (C\circ D)_f & \text{for }f\in E\backslash S(C,D).
  \end{align*}
\end{itemize}
\end{defn}

The main proof of this paper concerns the connection between an oriented matroid and its \emph{dual}. For an oriented matroid $M = (E,\mathfrak{D})$, there is a unique oriented matroid $M^* = (E,\mathfrak{D}^*)$ with vectors
\[
\mathfrak{D}^*
=
\{
(F^+,F^-) \mid D\perp F \text{ for all }D\in \mathfrak{D}
\}
\]
where $F\perp D$ if $F$ and $D$ are \emph{orthogonal}, i.e. $\{F_e \cdot D_e \mid e\in E\}$ either equals $\{0\}$ or contains $\{+,-\}$ \cite[\S 6.2.5]{richter-gebert-ziegler}.
We call $M^*$ the dual matroid to $M$ and call $\mathfrak{D}^*$ the \emph{covectors} of $M$.
One can show that the covectors of $M$ also satisfy the vector axioms so that the dual oriented matroid is in fact an oriented matroid.

We will be concerned with oriented matroids defined by sets of vectors $ \{v_1,\ldots,v_n\}$ in $\R^d$, which naturally come equipped with \emph{signed dependencies} given by linear combinations. Whenever $\sum_{v\in\underline{D}} \lambda_v v = \mathbf{0}$, one has a signed dependency $D = (D^+,D^-)$ where
\[
D_v =
\begin{cases}
+ & \text{if }\lambda_v>0\\
- & \text{if }\lambda_v<0\\
0 & \text{else}.
\end{cases}
\]
In this context, one can think of the composition product of as a sum of dependencies where the second dependency is multiplied by a small, positive number.

The covectors of this oriented matroid also have a well-known geometric interpretation via (nonempty) intersections of halfspaces defined by some of the hyperplanes of $\A$, see \cite[Section 1.1.3]{AguiarMahajan} for example. We will use the fact that if an intersection $\bigcap_{i} H_i^{\varepsilon_i}$ is nonempty for $\{\varepsilon_i\}_i$, then there is a covector $F\in\mathfrak{D}^*$ such that $F_i = \varepsilon_i$ for all $i\in\underline{F}$.

The minimal, nonempty signed dependencies of an oriented matroid are called \emph{signed circuits} and we denote the set of all signed circuits of $\DDD$ by $\CCC$, i.e.
\[
\CCC: = \{C\in \DDD  \mid \underline{C}
\text{ is minimal under inclusion, }C\not=\mathbb{0} \}.
\]
A theorem of Bland-Las Vergnas and Edmonds-Mandel \cite[Theorem 3.7.5]{BjornerEtAl} says that every vector $D\in\DDD$ is a composition of signed circuits, i.e. there is some $k\in\Z$ and collection $C^{(1)},\dots, C^{(k)}\in\CCC$ such that
\[
D = C^{(1)} \circ C^{(2)} \circ \cdots \circ C^{(k)}.
\]
Furthermore, one can select the circuits of this composition so that they \emph{conform} to $D$, meaning that for all $i=1,\dots,k$ and $e\in \underline{D}$: if $C^{(k)}_e$ is nonzero then $C^{(i)}_e = D_e$ \cite[Proposition 3.7.2]{BjornerEtAl}. We will use a weaker version in the proof of our main theorem: every signed dependence $D$ can be written as a composition of circuits $D = C^{(1)} \circ C^{(2)} \circ \cdots \circ C^{(k)}$ and it is easy to see that the first circuit $C^{(1)}$ always conforms to $D$.

In order to simplify notation, we will hereafter conflate a vector $v_i$ with its index $i$. We take $E = [n]$, so that $\CCC$ is a collection of signed subsets of $[n]$.
For $C\in\CCC$, we say that $\underline{C}-\{i\}$ is a \emph{broken circuit} if $i$ is the smallest index (under the usual order on $[n]$ in which $1<2<3<\cdots < n$) such that $i\in C$. We will also consider the \emph{no broken circuit sets} of $\A$, denoted $NBC(\A)$, which are the subsets of $[n]$ containing no broken circuits.
 A no broken circuit set $N\in NBC(\A)$ of $\A$ is a \emph{$\K$ no broken circuit set} or $\K$-NBC set if
\[
\bigcap_{i\in N} H_i^0 \in \LI(\K).
\]
If we take our cone to be the intersection of no halfspaces, i.e. \changes{the set of walls is empty,} then we recover the full arrangement, and the $\K$-NBC sets are precisely the usual NBC sets of the arrangement. We will denote the set of $\K$-NBC sets by $NBC(\K)$.
In the introduction, we saw that the $\K$-NBC sets provide a secondary description for the Poincar\'{e} polynomial
\[
\Poin(\K,t)= \sum_{N \in NBC(\K)}
t^{\# N}.
\]
By setting $t=1$, we obtain Equation \eqref{cone-nbc-sets-equinumerous-with-cone-chambers}, which says
\(
\#\C(\K) = \#NBC(\K).
\)
This equality will be central in our understanding of the Varchenko-Gel'fand ring, which we turn to now.

\subsection{The Varchenko-Gel'fand Ring}
The \emph{Varchenko-Gel'fand ring} of an arrangement $\A$ is the ring of maps $f:\C(\A)\to \Z$ under pointwise addition and multiplication \cite{VarchenkoGelfand}. Similarly, we define the \emph{Varchenko-Gel'fand ring of a cone} $\K$ to be the ring with underlying set
\[
VG(\K) = \{f:\C(\K)\to \Z\}
\]
under pointwise addition and multiplication. We can represent elements of $VG(\K)$ as a labelling of the chambers of $\K$ with integers.

\begin{example}
Consider the cone from Example \ref{example:cones}. Below are several elements of $VG(\K)$:

\bigskip

    \begin{center}
    \begin {tikzpicture}[scale=.5]

        \node[invisivertex] (f) at (-4,0){$x_1 = $};

      \node[invisivertex] (A1) at (-2,-2){};
      \node[invisivertex] (A2) at (2,2){$H_1$};
      \node[invisivertex] (B1) at (-2,2){};
      \node[invisivertex] (B2) at (2,-2){$H_3$};
      \node[invisivertex] (C1) at (-3,0){};
      \node[invisivertex] (C2) at (3,0){$H_2$};

      \node[chamberlabel] (L1) at (1.5,.75){$1$};
      \node[chamberlabel](L2) at (1.5,-.75){$1$};
      \node[chamberlabel] (L3) at (0,-1.5){$1$};

      \path[-] (A1) edge [bend left =0, dashed,teal] node[above] {} (A2);
      \path[-] (B1) edge [bend left =0] node[above] {} (B2);
      \path[-] (C1) edge [bend left =0] node[above] {} (C2);

      \fill[teal!60,nearly transparent] (-2,-2) -- (2,2) -- (3,0) -- (2,-2) -- cycle;
      \end{tikzpicture}
\hspace{1cm}
    \begin {tikzpicture}[scale=.5]

    \node[invisivertex] (f) at (-4,0){$x_2 = $};

      \node[invisivertex] (A1) at (-2,-2){};
      \node[invisivertex] (A2) at (2,2){$H_1$};
      \node[invisivertex] (B1) at (-2,2){};
      \node[invisivertex] (B2) at (2,-2){$H_3$};
      \node[invisivertex] (C1) at (-3,0){};
      \node[invisivertex] (C2) at (3,0){$H_2$};

   \node[chamberlabel] (L1) at (1.5,.75){$0$};
    \node[chamberlabel] (L2) at (1.5,-.75){$1$};
    \node[chamberlabel] (L3) at (0,-1.5){$1$};

      \path[-] (A1) edge [bend left =0,dashed,teal] node[above] {} (A2);
      \path[-] (B1) edge [bend left =0] node[above] {} (B2);
      \path[-] (C1) edge [bend left =0] node[above] {} (C2);

      \fill[teal!60,nearly transparent] (-2,-2) -- (2,2) -- (3,0) -- (2,-2) -- cycle;
     \end{tikzpicture}
     \hspace{1cm}
    \begin {tikzpicture}[scale=.5]

    \node[invisivertex] (f) at (-4,0){$x_3 = $};

      \node[invisivertex] (A1) at (-2,-2){};
      \node[invisivertex] (A2) at (2,2){$H_1$};
      \node[invisivertex] (B1) at (-2,2){};
      \node[invisivertex] (B2) at (2,-2){$H_3$};
      \node[invisivertex] (C1) at (-3,0){};
      \node[invisivertex] (C2) at (3,0){$H_2$};

    \node[chamberlabel] (L1) at (1.5,.75){$0$};
    \node[chamberlabel] (L2) at (1.5,-.75){$0$};
    \node[chamberlabel] (L3) at (0,-1.5){$1$};

      \path[-] (A1) edge [bend left =0,dashed,teal] node[above] {} (A2);
      \path[-] (B1) edge [bend left =0] node[above] {} (B2);
      \path[-] (C1) edge [bend left =0] node[above] {} (C2);

      \fill[teal!60,nearly transparent] (-2,-2) -- (2,2) -- (3,0) -- (2,-2) -- cycle;
      \end{tikzpicture}
\end{center}
\end{example}

In the preceding example, the elements are suggestively labelled $x_1, x_2,$ and $x_3$ to represent \emph{Heaviside functions} (defined below) given by some orientation of the hyperplanes $H_1,H_2,$ and $H_3$. The Heaviside function associated to $H_3$ is not included, as it would be $1$ on every chamber of the cone.

\medskip

In Varchenko and Gel'fand's original paper \cite{VarchenkoGelfand}, they observe that $VG(\A)$ is generated as a $\Z$-algebra by Heaviside functions
\[
x_i(C)
=
\begin{cases}
1 & \text{if}~C\subseteq H_i^+ \\
0 & \text{else}
\end{cases}
\qquad~\qquad \text{for }C\in\C(\A).
\]
for each hyperplane $H_i\in \LL(\A)$.
It suffices to check that if $f:\C(\A) \to \Z$
is
\[
f = \sum_{C\in \C(\A)} f(C) ~ \prod_{\substack{i\in W_C\\ C\subseteq H_i^+}} x_i
\prod_{\substack{j\in W_C\\ C\subseteq H_j^-}} (1 - x_i).
\]
where $W_C := \{i \mid H_i \cap \overline{C} \not= \emptyset\}\subseteq[n]$ is the set of hyperplanes which have a nonempty intersection the closure of $C$.
Their proof extends without modification to the cone case, where $x_i(C)$ is $1$ when both $C \subseteq H_i^+$ and $C\in\C(\K)$, and $0$ otherwise.
When $W$ is a choice of walls for the cone $\K$, this means that $x_i \equiv 1$ for each $i\in W$.

\begin{remark}
We will usually view the Varchenko-Gel'fand ring of a cone as a ring of functions $\C(\K)\to \Z$.
However, it is also a quotient of $VG(\A)$:
one has a surjective \emph{restriction map}
\(
{\sf res}: VG(\A) \twoheadrightarrow VG(\K)
\)
sending $f \mapsto f|_{\K}$
defined by
\(
f|_{\K}(C) := f(C)
\)
for $C\in\C(\K)$.
\end{remark}

In the previous section, we introduced oriented matroids and defined a family of sets called the $\K$-NBC sets. By the definition of the Varchenko-Gel'fand ring, we have $VG(\K) \cong \Z^{\#\C(\K)}$. Combining this isomorphism with Equation \eqref{cone-nbc-sets-equinumerous-with-cone-chambers} implies
\[
VG(\K) \cong \Z^{\#\C(\K)} = \Z^{\#NBC(\K)}.
\]
This chain of equivalences will be crucial in the proof of the main theorem, which provides an explicit basis for $VG(\K)$ in terms of the  \emph{$\K$-NBC monomials}
\(
e_N = \prod_{i\in N} e_i
\)
for $N\in NBC(\K)$.

\subsection{Some Commutative Algebra}\label{sec:background:groebner}

This section reviews some commutative algebra material on polynomial rings over $\Z$ and and their quotients.  For more details, see \cite{Adams-Loustaunau,AtiyahMacdonald,eisenbud}.

\subsubsection{Monomial orders}

A polynomial in $\Z[e_1,\ldots,e_n]$ is a sum
\[
f=\sum_{a=(a_1,\ldots,a_n)} c_a \, e_1^{a_1} \cdots e_n^{a_n}
\]
where $c_a\in\Z$. When $c_a \neq 0$, one calls $c_a \, e_1^{a_1} \cdots e_n^{a_n}$ a {\it term} of $f$, and
$e_1^{a_1} \cdots e_n^{a_n}$ a {\it monomial} of $f$.
Define $\deg(f):=\max\{\sum_i a_i: c_a \neq 0\}$
and then the {\it degree-initial form} of $f$ is
\[
\init_{\deg}(f):= \sum_{\substack{a\\ \sum_i a_i =\deg(f)} } c_a \, e_1^{a_1} \cdots e_n^{a_n}.
\]
A {\it monomial ordering} is a total (linear) order $\prec$ {\it well-ordering} on the set of all monomials $m$ in  $\Z[e_1,\ldots,e_n]$ which respects multiplication
in the sense that
$m\prec m'$ implies $m \cdot m''\prec m'\cdot m''$ for all monomials $m, m', m'' \in \Z[e_1,\ldots,e_n]$.
Define the \emph{$\prec$-leading monomial} $\init_\prec(f)$ to be the $\prec$-highest monomial of $f$.
Say that $\prec$ is a {\it degree order} if it is compatible
with $\init_{\deg}$ in the sense that
\(
\init_\prec(f)=\init_\prec(\init_{\deg}(f))
\)
for all $f$; see Sturmfels \cite[Chapter 1]{Sturmfels} for more on
these notions.
Given a collection $\GGG=\{ g_i \}_{i \in I}$ of polynomials,
say that a monomial $m$ is {\it $\init_\prec(\GGG)$-standard} if it is
divisible by none of $\{ \init_\prec(g_i) \}_{i \in I}$.

\subsubsection{Filtrations and Associated Graded Rings}

Let $R$ be a commutative ring with unit. An \emph{(ascending) filtration} of $R$ is a sequence $F_0\subseteq F_1 \subseteq F_2 \subseteq \dots$ of nested $\Z$-submodules of $R$
with the property that if $f\in F_c$ and $g\in F_d$, then $f\cdot g \in F_{c+d}$. In this paper, we consider the \emph{degree filtration} $\{F_d\}_{d \geq 0}$ for quotient rings
$R = \Z[e_1,\dots, e_n]/I$, where $I$ is an ideal of $ \Z[e_1,\dots, e_n]$:  define
$F_d$ to be the image within $R$ of the polynomials in $\Z[e_1,\ldots,e_n]$
having degree at most $d$.
Define the associated graded ring
\[
\grr(R):=\grr_{\mathcal{F}}{(R)} := \bigoplus_{d\geq 0} F_d/F_{d-1}
\]
where we define $F_{-1}:=0$.

Recall that the \emph{rank} of a $\Z$-module $M$ is
\(
\text{rank}_{\Z}(M) := \dim_{\Q} \left( \Q \otimes_\Z M \right),
\)
see \cite[Section 11.6]{eisenbud}.  In the setting of
a degree filtration, each $F_d$ is a finitely generated $\Z$-module,
allowing us to define the Hilbert series of the associated graded ring:
\[
\text{Hilb} (\grr(R),t) : = \sum_{d\geq 0} \text{rank}_{\Z}(F_d/F_{d-1})~t^d.
\]
For example, we will wish to consider the associated graded ring of the Varchenko-Gel'fand ring with its degree filtration $\vgrr(\K):=\grr(VG(\K))$, with Hilbert series
$
\text{Hilb} (\vgrr(\K),t).
$

The proof of Theorem \ref{thm:main theorem} in Section \ref{sec:definition of GGG} uses a certain general lemma, which we state and prove now.
Experts may recognize this lemma as a standard fact from
Gr\"{o}bner basis theory when the polynomial rings are defined over a field, but the modification here relates to polynomial rings over $\Z$;
see Remark \ref{rem:integer-grobner-field} below.

\begin{lemma}\label{lem:integer-grobner}
Let $\preceq$ be a monomial order and assume one has a $\Z$-algebra surjection
$S:=\Z[e_1,\ldots,e_n] \overset{\varphi}{\twoheadrightarrow} R$
in which $R$ is a free $\Z$-module of rank $r$,
and $\GGG=\{g_i\}_{i \in I} \subset S$ has these properties:

\begin{itemize}
\item[(i)]  $\GGG \subset \ker \varphi$.
\item[(ii)]  Each $g_i$ is $\preceq$-monic, meaning $\init_\prec(g_i)$ has coefficient $\pm1$ in $g_i$.
\item[(iii)] The set of $\init_\prec\GGG$-standard monomials $\NNN=\{m_1,\dots,m_t\}$ has cardinality $t\leq r$.
\end{itemize}

Then one has these implications:

\begin{itemize}
\item[(a)] $\ker(\varphi)=(\GGG)$, so that $\varphi$ induces a $\Z$-algebra isomorphism
\(
S/(\GGG) \cong R.
\)

\item[(b)] The cardinality $\#\NNN=t=r$, and $R$ has $\varphi(\NNN)=\{\varphi(m_1), \ldots, \varphi(m_r)\}$ as a $\Z$-basis.
\end{itemize}

If  $\preceq$ is also a degree ordering, then one has two further implications:

\begin{itemize}
\item[(c)]
    The map $S \overset{\psi}{\rightarrow}  \grr(R)$ sending $e_i \mapsto \bar{x}_i$ in $F_1/F_0$ is surjective, with $\ker(\psi)=(\init_{\deg}(\GGG) )$, so that it induces a $\Z$-algebra isomorphism
\[
S /( \init_{\deg}(\GGG) ) \overset{\psi}{\rightarrow}  \grr(R)
\]
\item[(d)] For $d\geq 0$, each $F_d/F_{d-1}$ is a free $\Z$-module on the basis $\{m_i \in \NNN: \deg(m_i)=d\}$,
so that
\[
\Hilb(\grr(R),t)=\sum_{i=1}^r t^{\deg(m_i)}.
\]
\end{itemize}
\end{lemma}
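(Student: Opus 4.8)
The plan is to run a division algorithm over $\Z$ — which is legal precisely because the $g_i$ are $\prec$-monic — and then translate the spanning information it produces into rank counts over $\Z$, where the hypothesis $t\le r$ together with the freeness of $R$ is what pins everything down. The basic tool is a division lemma valid for any $\prec$-monic family $\GGG'\subset S$: every $f\in S$ can be written $f=\sum_j q_j g'_j+\rho$ with each $g'_j\in\GGG'$ and $\rho$ a $\Z$-linear combination of $\init_\prec(\GGG')$-standard monomials. This is the usual reduction: if $\init_\prec(f)$ is divisible by some $\init_\prec(g'_j)$, subtract the appropriate monomial multiple of $g'_j$; the $\pm 1$ leading coefficient is exactly what lets one cancel the top term without dividing, so one never leaves $\Z[e_1,\dots,e_n]$, and $\init_\prec(f)$ strictly drops, so the process halts since $\prec$ is a well-order. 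The upshot is that $S/(\GGG')$ is generated as a $\Z$-module by the images of the $\init_\prec(\GGG')$-standard monomials.

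For parts (a) and (b), apply this to $\GGG$: $S/(\GGG)$ is $\Z$-spanned by the $t$ elements $\{[m_i]\}$, and since $\GGG\subseteq\ker\varphi$, the map $\varphi$ descends to a surjection $S/(\GGG)\twoheadrightarrow R\cong\Z^r$. Precomposing with a surjection $\Z^t\twoheadrightarrow S/(\GGG)$ coming from the $t$ generators gives a surjection $\Z^t\twoheadrightarrow\Z^r$, and tensoring with $\Q$ forces $t\ge r$; with hypothesis (iii) this gives $t=r$. Then the composite $\Z^r\twoheadrightarrow\Z^r$ is a surjective endomorphism of a finitely generated free $\Z$-module, hence an isomorphism (it is right-invertible, so its matrix has determinant $\pm1$), which forces every map in the chain to be an isomorphism. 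Hence $\ker\varphi=(\GGG)$, $\#\NNN=r$, the $[m_i]$ form a $\Z$-basis of $S/(\GGG)$, and their images $\varphi(m_i)$ form a $\Z$-basis of $R$.

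For parts (c) and (d), take $\prec$ to be a degree order, so $\init_\prec(\init_{\deg}(g))=\init_\prec(g)$ and hence $\GGG':=\init_{\deg}(\GGG)$ is again $\prec$-monic with the same standard-monomial set $\NNN$. The homomorphism $\psi$ is graded and surjects onto each $F_d/F_{d-1}$, since that piece is $\Z$-spanned by the classes of $\varphi(m)=\psi(m)$ with $\deg m=d$; and $\psi(\init_{\deg}(g))=0$ because $\varphi(g)=0$ forces $\varphi(\init_{\deg}(g))=-\varphi(g-\init_{\deg}(g))\in F_{\deg(g)-1}$. Thus $\psi$ factors through a graded surjection $S/(\GGG')\twoheadrightarrow\grr(R)$, whose source is $\Z$-spanned by $\le\#\NNN=r$ elements by the division lemma. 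On the other hand, by part (b) each $F_d$ is a submodule of the free $\Z$-module $R$, hence free, so $\rank_\Z$ is additive along $0\to F_{d-1}\to F_d\to F_d/F_{d-1}\to 0$ and telescopes (the filtration being exhaustive) to $\sum_d\rank_\Z(F_d/F_{d-1})=\rank_\Z R=r$. So $\grr(R)$ is $\Z$-generated by $\le r$ elements and has rank $r$; the kernel of any presentation $\Z^{j}\twoheadrightarrow\grr(R)$ with $j\le r$ then forces $j=r$ and has rank $0$, and being a submodule of $\Z^r$ it is trivial, so $\grr(R)$ is free of rank $r$. Now the argument of the previous paragraph applies verbatim to the data $(S\xrightarrow{\psi}\grr(R),\GGG')$ and yields $\ker\psi=(\GGG')=(\init_{\deg}(\GGG))$ together with $\{[m_i]\}$ as a $\Z$-basis of $S/(\GGG')\cong\grr(R)$; since this basis is homogeneous, $F_d/F_{d-1}$ is free on $\{m_i:\deg m_i=d\}$, which is (d) and gives $\Hilb(\grr(R),t)=\sum_{i=1}^r t^{\deg m_i}$.

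The main obstacle is exactly the passage from coefficients in a field to coefficients in $\Z$. Over a field one would conclude "standard monomials are a basis" by a degree-by-degree dimension count; over $\Z$ a quotient $F_d/F_{d-1}$ of free modules can a priori carry torsion, so freeness is not automatic and has to be \emph{forced} — here by the two-sided rank estimate together with the inequality $t\le r$. The $\pm 1$-leading-coefficient hypothesis is precisely what makes the division algorithm run over $\Z$ at all (this is the gap between ordinary and ``strong'' Gr\"obner bases over $\Z$), and the hypothesis that $R$ is free of finite rank is what allows the rank count to close the argument; everything else is bookkeeping.
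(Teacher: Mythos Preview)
Your proof is correct and follows essentially the same route as the paper: run the multivariate division algorithm (enabled by the $\prec$-monic hypothesis) to get a surjection $\Z^t\twoheadrightarrow S/(\GGG)\twoheadrightarrow R\cong\Z^r$, use a rank comparison to force $t=r$ and all maps to be isomorphisms, then repeat with $\init_{\deg}(\GGG)$ for the graded version, using that $\rank_\Z\grr(R)=\rank_\Z R$ by additivity of rank. The paper cites the Atiyah--Macdonald exercise that any surjection $\Z^t\to M$ with $\rank_\Z M\ge t$ is an isomorphism, which lets it handle $\grr(R)$ without first verifying it is free; you instead explicitly argue freeness of $\grr(R)$ before reapplying the earlier step, which is a harmless extra line.
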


\begin{proof}
Note since each $g_i$ in $\GGG$ is $\prec$-monic,  the usual
{\it multivariate division algorithm}
with respect to $\GGG$ using the order $\prec$ (see Cox, Little and O'Shea \cite[\S 2.3, Theorem 3]{cox-little-oshea}) shows that
every $f$ in $S$ lies in
\(
\Z m_1+\cdots + \Z m_t+(\GGG).
\)
Therefore, if one defines a $\Z$-module map $\Z^t \rightarrow S$ that sends the $i^{th}$ standard basis
element of $\Z^t$ to the $\init_\prec(\GGG)$-standard monomial $m_i$, then the
composite $\Z$-module map $\Z^t \rightarrow S \twoheadrightarrow S/(\GGG)$ is surjective.  This composite is the
map $\alpha$ in this sequence of $\Z$-module surjections/isomorphisms
\begin{equation}
\label{sequence-of-surjections}
\Z^t \overset{\alpha}{\twoheadrightarrow} S/(\GGG) \overset{\beta}{\twoheadrightarrow} S/\ker(\varphi) \cong R \cong \Z^r,
\end{equation}
where $\beta$ comes from our assumption (i) above.
It is well-known (see \cite[Chapter 2, Exercise 12]{AtiyahMacdonald}, for example) that if $M$ is a $\Z$-module with $\text{rank}_{\Z}(M) = r \geq t$, then any surjection $\Z^t\to M$ must in fact be an isomorphism, with $t=r$. It follows that the composite of
all maps in \eqref{sequence-of-surjections} is an isomorphism. Thus $\beta$ and $\beta \circ \alpha$  are isomorphisms,
proving assertions (a) and (b), respectively.

Now assume further that $\prec$ is a degree ordering.  Since
$
\init_\prec(\init_{\deg}(g_i))=\init_\prec(g_i),
$
replacing $\GGG$ with $\init_{\deg}(\GGG)$, we conclude as in the above proof
that the composite map
$$
\Z^r \rightarrow S \twoheadrightarrow S/(\init_{\deg}(\GGG))
$$
is surjective.
The fact that $S \overset{\varphi}{\twoheadrightarrow} R$ is surjective implies
that $S \overset{\psi}{\twoheadrightarrow} \grr(R)$ is also surjective.
Furthermore,  the definitions of $\init_{\deg}(-)$ and $\grr(R)$, together with  $\GGG \subset \ker(S \overset{\varphi}{\rightarrow} R)$, imply
\[
\init_{\deg}(\GGG) \subset \ker(S \overset{\psi}{\twoheadrightarrow} \grr(R)).
\]
Hence we again have a sequence of surjections and isomorphisms:
\begin{equation}
\label{gr-sequence-of-surjections}
\Z^r \overset{\gamma}{\twoheadrightarrow} S/(\init_{\deg}(\GGG)) \overset{\delta}{\twoheadrightarrow} S/\ker(\psi) \cong \grr(R).
\end{equation}
Note that $\rank_\Z (\grr(R)) =\rank_\Z R = r$, since $\rank_\Z(-)$ is additive along short exact sequences and direct sums.
Thus we can again conclude
that the composite of the surjections in \eqref{gr-sequence-of-surjections} is an isomorphism.
Hence $\delta$ is an isomorphism, proving (c).  Then (d) follows from $\delta \circ \gamma$ being an
isomorphism, upon noting that a monomial $m$ in $S$  has $\psi(m)$ lying in $F_d/F_{d-1}$ where $d=\deg(m)$.
\end{proof}

\begin{remark}\label{rem:integer-grobner-field}
Replacing $\Z$ by a field $\kk$, and replacing $\rank_\Z(-)$ with
$\dim_\kk(-)$, the proof of Lemma~\ref{lem:integer-grobner} shows $\GGG$ and $\init_{\deg}(\GGG)$ give Gr\"obner bases for the ideals presenting the rings $R$ and $\grr(R)$.
\end{remark}

\section{Relations among the Heaviside functions and the $\init_\prec(\GGG)$-standard monomials}\label{sec:definition of GGG}

Given a cone $\K$ in an arrangement $\A$, let $\GGG$ be the elements
shown in the second column of the
table in Figure~\ref{chart}.
In this section, we give two propositions regarding $\GGG$, which together prove Theorem \ref{thm:main theorem}.
First we show that the polynomials $\GGG$
lie in the kernel of the map $\varphi: \Z[e_1,\ldots,e_n] \longrightarrow VG(\K)$ which sends the variable $e_i$ to the Heaviside function $x_i$ for each $i\in [n]$.
After that, we fix a monomial order $\prec$ on $\Z[e_1,\dots,e_n]$ whose restriction to the variables is $e_1\prec \cdots\prec e_n$. We will show that the $\K$-NBC monomials are
exactly the $\init_\prec(\GGG)$-standard monomials. In fact, since Equation \eqref{cone-nbc-sets-equinumerous-with-cone-chambers} implies
$
VG(\K) \cong \Z^{\#\C(\K)} = \Z^{\# NBC(\K)},
$
using Lemma \ref{lem:integer-grobner}
it suffices to show that the $\init_\prec(\GGG)$-standard monomials are a subset of the $\K$-NBC monomials.

\begin{prop}\label{prop:containment}
Every polynomial in $\GGG$ lies in $\ker\varphi$.
\end{prop}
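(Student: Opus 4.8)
The plan is to verify, case by case according to the four families of relations in Figure~\ref{chart}, that each generator $g \in \GGG$ maps to $0$ under $\varphi \colon \Z[e_1,\dots,e_n] \to VG(\K)$; equivalently, that the function $\varphi(g) \colon \C(\K) \to \Z$ vanishes on every chamber $C$ of $\K$. Recall $\varphi(e_i) = x_i$, the Heaviside function with $x_i(C) = 1$ if $C \subseteq H_i^+$ and $0$ otherwise, and that $x_i \equiv 1$ on $\C(\K)$ for every $i \in W$ since $\K \subseteq H_i^+$.

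The first two families are immediate. For the \textsf{Idempotent} relations $e_i^2 - e_i$: since $x_i$ takes only the values $0$ and $1$, we have $x_i(C)^2 = x_i(C)$ for every chamber $C$, so $\varphi(e_i^2 - e_i) = 0$. For the \textsf{Unit} relations $e_i - 1$ with $i \in W$: as just noted, $x_i \equiv 1$ on $\C(\K)$, so $\varphi(e_i - 1) = 0$. The remaining work is the two \textsf{Circuit} families.

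For the general \textsf{Circuit} relation associated to a signed circuit $C = (C^+, C^-)$ with $W \cap \underline{C} = \emptyset$, I would show $\varphi$ sends
\[
(e_{C^+})\prod_{j\in C^-}(e_j - 1) \;-\; (e_{C^-})\prod_{i\in C^+}(e_i - 1)
\]
to the zero function. Evaluating at a chamber $C$, the first product is $\prod_{i\in C^+} x_i(C) \cdot \prod_{j\in C^-}(x_j(C) - 1)$, which is $\pm 1$ precisely when $x_i(C) = 1$ for all $i \in C^+$ and $x_j(C) = 0$ for all $j \in C^-$ — that is, when $C$ lies on the positive side of every hyperplane indexed by $\underline{C}$ with the signs prescribed by $(C^+,C^-)$; otherwise it is $0$. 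Similarly the second product is nonzero exactly when $C$ realizes the opposite sign pattern $(C^-, C^+)$ on $\underline{C}$. The key point is that \emph{neither} sign pattern is realizable by any chamber: since $(C^+,C^-)$ is a signed circuit, it is orthogonal to every covector, so no covector of the arrangement restricts to the sign vector $(C^+, C^-)$ (nor to its opposite) on $\underline{C}$; hence no chamber $C$ — whose sign vector is a covector — can satisfy either condition. Therefore both products vanish identically on $\C(\A)$, a fortiori on $\C(\K)$, and the difference maps to $0$. (Concretely: orthogonality of a circuit and a covector $F$ forces $\{F_e \cdot C_e : e \in E\}$ to be $\{0\}$ or to contain both signs, so $F$ cannot agree with $C$ on all of $\underline{C}$, and likewise cannot agree with $-C$.)

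For the \textsf{Cone-Circuit} relation, associated to a circuit $C = (C^+, C^-)$ with $\emptyset \neq W \cap C^+ = W \cap \underline{C}$, the generator is $e_{C^+ \setminus W} \cdot \prod_{j \in C^-}(e_j - 1)$. Evaluating at a chamber $C \in \C(\K)$: the factors $x_i$ for $i \in C^+ \cap W$ are all $\equiv 1$ on $\K$ and contribute nothing, so $\varphi$ of this generator equals $\prod_{i \in C^+\setminus W} x_i \cdot \prod_{j\in C^-}(x_j - 1)$ on $\C(\K)$, which, by the same reasoning as above, is nonzero at $C$ only if $C$ realizes the sign pattern with $x_i(C) = 1$ for $i \in C^+ \setminus W$ and $x_j(C) = 0$ for $j \in C^-$. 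But on $\K$ we also have $x_i(C) = 1$ for $i \in C^+ \cap W$, so such a $C$ would realize the full sign pattern $(C^+, C^-)$ on $\underline{C}$ — impossible by orthogonality with the circuit, exactly as before. Hence the generator maps to $0$ in $VG(\K)$; the symmetric case $\emptyset \neq W \cap C^- = W \cap \underline{C}$ is identical with the roles of $+$ and $-$ swapped.

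The main obstacle is purely the bookkeeping of signs: making sure the correspondence between ``$\prod x_i \prod (x_j - 1)$ is nonzero at $C$'' and ``the chamber's covector agrees with $(C^+, C^-)$ on $\underline{C}$'' is stated with the right orientation conventions, and that in the \textsf{Cone-Circuit} case the walls in $W$ are correctly absorbed. Once the translation to covectors is in place, the vanishing is a one-line consequence of circuit–covector orthogonality, so no genuinely hard step remains.
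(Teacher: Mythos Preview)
Your proof is correct and follows essentially the same strategy as the paper: handle {\sf Idempotent} and {\sf Unit} trivially, then show that for a signed circuit $C=(C^+,C^-)$ each of the two products $f_+=e_{C^+}\prod_{j\in C^-}(e_j-1)$ and $f_-=e_{C^-}\prod_{j\in C^+}(e_j-1)$ already vanishes under $\varphi$, so the {\sf Circuit} relation $f_+-f_-$ does too, and derive the {\sf Cone-Circuit} case by absorbing the wall factors via $x_i\equiv 1$ for $i\in W$. The only cosmetic difference is that the paper states the key point geometrically as $\bigcap_{i\in C^+}H_i^+\cap\bigcap_{j\in C^-}H_j^-=\emptyset$, while you phrase the same fact in oriented-matroid language as circuit--covector orthogonality; these are equivalent formulations of the same observation.
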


\begin{proof}
This holds for {\sf Idempotent} relations $e_i^2-e_i$
since Heaviside functions $x_i$ have $x_i(C) \in \{0,1\}$.
It holds for {\sf Unit} relations $e_i-1$ with $i \in W$,
since then $H_i^+ \supseteq \K$,
so $x_i(C)\equiv 1$ for all $C$ in $\C(\K)$.

To understand the {\sf Circuit} and {\sf Cone Circuit} relations,
note that the existence of a signed circuit $C = (C^+,C^-)$ implies that
these two intersections are empty, and hence contain no chambers:
\[
    \bigcap_{i\in C^+} H_i^+ \cap \bigcap_{j\in C^-} H_j^-
    =\emptyset
    = \bigcap_{i\in C^-} H_i^+\cap \bigcap_{j\in C^+} H_j^-
\]
Consequently, if one writes the {\sf Circuit} relation
as the difference $f_+-f_-$ of these two products
\begin{equation}
    \label{two-sides-of-circuit-relation}
f_+:=e_{C^+} \cdot \displaystyle\prod_{j \in C^-} (e_j-1)
\quad \text{ and } \quad
f_-:=e_{C^-} \displaystyle \prod_{j \in C^+} (e_j-1),
\end{equation}
one finds that both $f_+,f_-$ lie in $\ker \varphi$,
and hence so does the {\sf Circuit} relation $f_+-f_-$.

For a {\sf Cone Circuit} relation, assume without loss of generality that the signed
circuit $C = (C^+,C^-)$ has
$\emptyset \neq W \cap C_+ = W \cap C$.
Then since $\ker\varphi$ contains the product
$f_+$ defined in \eqref{two-sides-of-circuit-relation}
along with the {\sf Unit} relations $e_i-1$ for $i \in W \cap C^+$,
it also contains the {\sf Cone Circuit} relation
$e_{C^+\setminus W} \cdot \prod_{j \in C^-} (e_j-1)$.
\end{proof}

\begin{rem}
Note that if the signed circuit $C=(C^+,C^-)$ has
$W\cap C^+\not=\emptyset$, the element $f_-$ is divisible
by {\sf Unit} relations $e_i-1$ for $i \in W\cap C^+$,
and hence is superfluous in generating $\ker\varphi$.  Similarly,
 if $W\cap C^-\not=\emptyset$, then $f_+$ is a redundant generator.
Combining these: if both
$W\cap C^-\not=\emptyset$ and $W\cap C^+\not=\emptyset$, then both $f_+, f_-$ are redundant and so is the corresponding {\sf Circuit} relation.

Also note, when $H_i\in\A$ is neither one of the chosen set of defining hyperplanes nor
the union $W\cup \{i\}$ contains a signed circuit $C = (C^+,C^-)$. Furthermore, one can show that $i\in\underline{C}$ and that the {\sf Cone-Circuit} relation does not vanish, i.e. one of $e_i - 1$ or $e_i$ is in $\GGG$.
\end{rem}

\begin{prop}\label{prop:vg-assumption-iii}
  The $\init_\prec(\GGG)$-standard monomials are (a subset of the) $\K$-NBC monomials.
\end{prop}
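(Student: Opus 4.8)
I would prove this by taking an arbitrary $\init_\prec(\GGG)$-standard monomial $m$ and showing it has the form $e_S$ for a $\K$-NBC set $S$. First, $m$ is squarefree because $e_i^2 = \init_\prec(e_i^2 - e_i) \in \init_\prec(\GGG)$ for every $i$, and $m$ uses no variable $e_i$ with $i \in W$ because $e_i = \init_\prec(e_i - 1) \in \init_\prec(\GGG)$ for $i \in W$; hence $m = e_S$ with $S \subseteq [n] \setminus W$. It then remains to check the two conditions defining a $\K$-NBC set: \textbf{(A)} $S$ contains no broken circuit, and \textbf{(B)} $X := \bigcap_{i \in S} H_i$ satisfies $X \cap \K \neq \emptyset$. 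I use throughout that $\prec$ refines $e_1 \prec \cdots \prec e_n$, so that on subsets of $[n]$ it agrees with the usual order; in particular $i_0 := \min_\prec(\underline{C})$ is the index used to form the broken circuit $\underline{C} \setminus \{i_0\}$ from a circuit $C$.

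For \textbf{(A)}, suppose for contradiction that $\underline{C} \setminus \{i_0\} \subseteq S$ for some signed circuit $C$. If $W \cap \underline{C} = \emptyset$, then $C$ contributes a {\sf Circuit} relation of $\GGG$ whose $\prec$-leading term is the broken-circuit monomial $e_{\underline{C} \setminus \{i_0\}}$ (see Figure~\ref{chart}), which divides $e_S$ --- contradicting $\init_\prec(\GGG)$-standardness. If $W \cap \underline{C} \neq \emptyset$, then since $S \cap W = \emptyset$ and $\underline{C} \setminus \{i_0\} \subseteq S$ we must have $W \cap \underline{C} = \{i_0\}$; replacing $C$ by $-C$ if necessary we may take $i_0 \in C^+$, so that $\emptyset \neq W \cap C^+ = W \cap \underline{C}$ and $C$ contributes a {\sf Cone Circuit} relation with $\prec$-leading term $e_{\underline{C} \setminus W} = e_{\underline{C} \setminus \{i_0\}}$, which again divides $e_S$. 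Either way we reach a contradiction, so $S$ has no broken circuit; in particular $S$ contains no circuit of $\A$.

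For \textbf{(B)} --- the real content --- I would argue by contradiction using convex separation together with oriented-matroid duality. Suppose $X \cap \K = \emptyset$. As $X$ is a linear subspace and $\K$ is a nonempty open convex cone, there is a nonzero linear functional $\phi$ separating them; being bounded (hence identically zero) on the subspace $X$, it vanishes on $X$, and it is nonnegative on $\overline{\K} = \{\mathbf{x} : v_j \cdot \mathbf{x} \geq 0 \text{ for all } j \in W\}$. By Farkas' lemma $\phi = \sum_{j \in W} \mu_j v_j$ with each $\mu_j \geq 0$ and not all zero, while $\phi|_X = 0$ gives $\phi \in \mathrm{span}\{v_i : i \in S\}$, say $\phi = \sum_{i \in S} \lambda_i v_i$. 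Subtracting produces a nontrivial dependence $\sum_{i \in S} \lambda_i v_i - \sum_{j \in W} \mu_j v_j = 0$ among $\{v_k : k \in S \cup W\}$ in which every $v_j$ with $j \in W$ has nonpositive coefficient and at least one has strictly negative coefficient. Reading off signs, this is a signed dependency $D$ with $\underline{D} \subseteq S \cup W$, with $W \cap D^- = \emptyset$, and with $W \cap D^+ \neq \emptyset$.

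Now write $D = C \circ C^{(2)} \circ \cdots \circ C^{(k)}$ as a composition of signed circuits whose first circuit $C$ conforms to $D$, as recalled in Section~\ref{sec:background:oriented matroids}. Then $\underline{C} \subseteq \underline{D} \subseteq S \cup W$; since $S$ contains no circuit of $\A$ we get $\underline{C} \not\subseteq S$, so $W \cap \underline{C} \neq \emptyset$. Conformity of $C$ with $D$ forces $C^- \subseteq D^-$, hence $C^- \cap W = \emptyset$, so $\emptyset \neq W \cap C^+ = W \cap \underline{C}$ --- exactly the hypothesis of a {\sf Cone Circuit} relation. Moreover $\underline{C} \setminus W \subseteq S$, and $\underline{C} \setminus W \neq \emptyset$, since $\underline{C} \subseteq W$ would force $C^- = \emptyset$, making $C$ a ``positive'' circuit supported on walls and hence $\K = \bigcap_{j \in W} H_j^+ = \emptyset$, absurd. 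Thus the {\sf Cone Circuit} relation attached to $C$ has $\prec$-leading term $e_{\underline{C} \setminus W}$, a nonconstant monomial dividing $e_S$, contradicting $\init_\prec(\GGG)$-standardness of $e_S$. Hence $X \cap \K \neq \emptyset$, so $S \in NBC(\K)$. The crux is this last argument: the failure $X \cap \K = \emptyset$ must be converted (via separation and Farkas) into a signed dependency whose ``wall part'' lies entirely in $D^+$, and then the independence of $S$ from \textbf{(A)} is what guarantees the first conforming circuit of its decomposition actually meets $W$, so that it yields a {\sf Cone Circuit} relation rather than merely a {\sf Circuit} relation of the kind already treated in \textbf{(A)}.
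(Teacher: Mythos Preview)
Your proof is correct and follows the same four-step skeleton as the paper: (1) squarefree, (2) disjoint from $W$, (3) no broken circuits via the {\sf Circuit}/{\sf Cone Circuit} dichotomy, and (4) interiority of $X=\bigcap_{i\in S}H_i$ by producing a signed dependence $D$ with $W\cap\underline{D}\subseteq D^+$ and then extracting a conforming circuit. The one substantive difference is in how you manufacture $D$ in step \textbf{(B)}: you invoke convex separation together with Farkas' lemma, whereas the paper argues via oriented-matroid duality (Observations~\ref{obs:covectors} and~\ref{obs:gordan}), first eliminating down to a nonexistent covector on $W\cup N$ and then applying Gordan's theorem to obtain the dependence. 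These are two phrasings of the same theorem of the alternative, so the arguments are equivalent at the realizable level; your version is pleasantly concrete, while the paper's covector/vector formulation is what allows the result to carry over verbatim to abstract oriented matroids (as noted in the remark following the paper's proof). You are also slightly more careful than the paper in ruling out the degenerate case $\underline{C}\subseteq W$, which indeed cannot occur when $\K\neq\emptyset$.
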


\begin{proof}
Let $m\in \Z[e_1,\dots,e_n]$ be any $\init_\prec(\GGG)$-standard monomial. We show that $m$ is a $\K$-NBC monomial
in several reduction steps.

\bigskip

\noindent
{\sf Reduction 1.}
Since $e_i^2 \in \init_{\prec}(G)$ for $1 \leq i \leq n$,
we may assume that $m = e_N$ for some $N\subseteq\{1,\dots,n\}$.

\bigskip

\noindent
{\sf Reduction 2.}
Since $e_i \in \init_{\prec}(G)$ for $i \in W$, we may assume that $m=e_N$ with $W\cap N = \emptyset$.

\bigskip

\noindent
{\sf Reduction 3.}
We can assume that $m = e_N$ where $N$ contains no broken circuits, i.e. $N \in NBC(\A)$.
To see this, suppose $N$ contains a signed circuit $C$ with $i_0 = \text{min}(\underline{C})$ such that the corresponding broken circuit $\underline{C}\backslash\{i_0\}$ is contained in $N$.

Since $W\cap N = \emptyset$ (from {\sf Reduction 2}) and  $\underline{C}\backslash\{i_0\}\subseteq N$, either $i_0\in W$ or $W\cap \underline{C}$ is empty. We obtain a contradiction in both cases. First, if $W \cap \underline{C}=\{i_0\}$, then
 \[
 N \supseteq \underline{C} \setminus \{i_0\} = \underline{C} \setminus W,
 \]
 forcing $e_N$ to be divisible
 by $e_{\underline{C} \setminus W}$, and contradicting that $e_N$ is
 $\init_\prec(\GGG)$-standard.  On the other hand, if $\#W \cap \underline{C}=\emptyset$, then $e_N$ is divisible by $e_{\underline{C} \setminus \{i_0\}}$  which
 contradicts the assumption that $e_N$ is $\init_\prec(\GGG)$-standard.

\bigskip

\noindent
{\sf Reduction 4.}
Assuming that $m=e_N$ where $N$ is in $NBC(\A)$, we will show that it also lies in
$NBC(\K)$, that is,
 $X:=\cap_{j \in N} H^0_j$ has $\K \cap X \neq \emptyset$.
For the sake of contradiction, assume
 \[
 \K \cap X =
 \bigcap_{i\in W}H_i^+ \cap \bigcap_{j\in N} H^0_j = \emptyset.
 \]
It is easy to see that there is a choice of signs $\varepsilon\in \{+,-\}^N$
for which
\begin{align}\label{eqn:empty-intersection}
    \bigcap_{i\in W}H_i^+ \cap \bigcap_{j\in N} H^{\varepsilon_j}_j = \emptyset
\end{align}
(see Observation \ref{obs:covectors}, below, for details).
Translating Equation \eqref{eqn:empty-intersection} into the language of oriented matroids, we have that
there is no covector $F = (F^+,F^-)$ of the matroid on $E = W\cup N$ with
\[
F_j
=
\begin{cases}
+ & \text{if } j\in W\\
\varepsilon_j & \text{if } j\in N.
\end{cases}
\]
From Observation \ref{obs:gordan} (below) or, equivalently, Gordan's Theorem \cite{gordan}, the fact that no such $F$ exists, means that there does exist a (nonzero) signed dependence $D=(D^+,D^-) \in \mathfrak{D}$ with
$\underline{D} \subseteq W\cup N$ and
$(W\cap \underline{D})\subseteq D^+$.

Recall from \cite[Theorem 3.7.5]{BjornerEtAl} that every signed dependence $D$ is a composition of circuits and that at least one of these circuits\footnote{There is always a choice composition for which \emph{every} circuit conforms to $D$ but we only need one conformal circuit, see \cite[Proposition 3.7.2]{BjornerEtAl}.} must conform to $D$. Let $C$ be such a circuit conforming to $D$.
Then $C$ has
has $\underline{C}\subseteq W\cup N$ and
$(W\cap \underline{C})\subseteq C^+$. Since $N$ is an NBC set, the corresponding collection of vectors $\{v_i\}_{i\in N}$ is independent and we can assume that $W\cap \underline{C}$ is nonempty.
Thus its {\sf Cone-Circuit} relation has initial form $e_{\underline{C}\backslash W}$ dividing $e_N$, contradicting
$e_N$ being  $\init_\prec(\GGG)$-standard.
\end{proof}

Combining the preceding proposition with Lemma \ref{lem:integer-grobner} gives a proof of Theorem \ref{thm:main theorem}. For completeness, we now state two observations about oriented matroids, which were used in the preceding proof. The first concerns the geometric interpretation of covectors as faces of hyperplane arrangements and the second observation connects the non-existence of a covectors to the existence of a vector.

\begin{observation}\label{obs:covectors}
If
\(
 \K \cap X =
 \bigcap_{i\in W}H_i^+ \cap \bigcap_{j\in N} H^0_j = \emptyset
\)
then there is some choice of signs $(\varepsilon_j)_{j \in N}$ in $\{+,-\}^N$ such that
\[
 \K \cap X =
 \bigcap_{i\in W}H_i^+ \cap \bigcap_{j\in N} H^{\varepsilon_j}_j = \emptyset.
\]
In particular, the signed set $F^{\varepsilon} = ((F^{\varepsilon})^+, (F^{\varepsilon})^-)$
with
\[
F_i^{\varepsilon} =
\begin{cases}
+ & \text{if } i \in W\\
\varepsilon_i & \text{if } i \in N.
\end{cases}
\]
is not a covector of the oriented matroid on ground set $E = W\cup N$.
\end{observation}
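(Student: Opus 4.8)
The plan is to establish the displayed emptiness statement first; the ``in particular'' clause is then immediate, since if $F^{\varepsilon}$ were a covector of the oriented matroid on $E=W\cup N$ then — its coordinates all being $\pm 1$ — it would be the sign vector of some point $x\in V$, forcing $x\in\bigcap_{i\in W}H_i^+\cap\bigcap_{j\in N}H_j^{\varepsilon_j}$ and this intersection to be nonempty, contradicting what we will have shown.

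To produce $\varepsilon$, I would argue by convex separation. Write $X=\bigcap_{j\in N}H_j^0$, a linear subspace of $V$, and recall that $\K=\bigcap_{i\in W}H_i^+$ is open and convex with $\K\cap X=\emptyset$ (the case $\K=\emptyset$ being trivial). First apply the separating hyperplane theorem to the disjoint pair $(\K,X)$, with $\K$ open: it yields a nonzero linear functional $\phi$ on $V$ and a scalar $c$ with $\phi<c$ on $\K$ and $\phi\ge c$ on $X$. Because $X$ is a linear subspace, a functional bounded below on $X$ must vanish on all of $X$ (otherwise it is unbounded below there); hence $\phi|_X\equiv 0$, and evaluating at $0\in X$ gives $c\le 0$, so in fact $\phi<0$ on $\K$. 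Since $\phi$ vanishes on $X=\{x:v_j\cdot x=0\text{ for all }j\in N\}$ and, because $N\in NBC(\A)$, the normals $\{v_j\}_{j\in N}$ are linearly independent and span $X^{\perp}$, we may write $\phi=\sum_{j\in N}\lambda_j v_j$ with the $\lambda_j$ not all zero.

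Now define the sign vector by $\varepsilon_j:=\operatorname{sign}(\lambda_j)$ when $\lambda_j\neq 0$, and $\varepsilon_j:=+$ otherwise. If $x$ satisfies $\varepsilon_j\,(v_j\cdot x)>0$ for all $j\in N$, then $\lambda_j\,(v_j\cdot x)=|\lambda_j|\,\varepsilon_j\,(v_j\cdot x)$ is positive when $\lambda_j\neq 0$ and zero otherwise, and since some $\lambda_j\neq 0$ we get $\phi(x)=\sum_{j\in N}\lambda_j\,(v_j\cdot x)>0$; but $\phi<0$ on $\K$, so $x\notin\K$. That is exactly $\bigcap_{i\in W}H_i^+\cap\bigcap_{j\in N}H_j^{\varepsilon_j}=\emptyset$, which by the first paragraph also forces $F^{\varepsilon}$ not to be a covector, completing the argument.

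I do not expect a serious obstacle. The two points needing a little care are: noting that a linear functional bounded on one side of a linear subspace vanishes on it (this is what upgrades $\le$ to the strict $\phi<0$ on $\K$ and pins $\phi$ down to $\operatorname{span}\{v_j\}_{j\in N}=X^{\perp}$, where $N\in NBC(\A)$ is used), and choosing signs for the indices $j$ with $\lambda_j=0$, where any choice works. Conceptually, linear independence of $\{v_j\}_{j\in N}$ makes the subarrangement $\{H_j\}_{j\in N}$ locally Boolean around $X$, so the support of the separating functional singles out a full-dimensional ``orthant cone'' that misses $\K$.
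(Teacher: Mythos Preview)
Your argument is correct and takes a genuinely different route from the paper's. The paper argues by contraposition using only oriented matroid axioms: if every $\varepsilon\in\{+,-\}^N$ gave a nonempty intersection, one would have a covector $F^\varepsilon$ for each such $\varepsilon$, and repeatedly applying the elimination axiom {\sf V3} to pairs of these differing in a single $N$-coordinate eventually produces a covector with $F_j=0$ for all $j\in N$ and $F_i=+$ for all $i\in W$, contradicting $\K\cap X=\emptyset$. Your route is the classical geometric one via the separating hyperplane theorem: the separating functional $\phi$ lies in $X^\perp=\operatorname{span}\{v_j\}_{j\in N}$, and the signs of its coefficients give $\varepsilon$ directly.

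Each approach buys something. The paper's argument is purely combinatorial and therefore extends verbatim to abstract (not necessarily realizable) oriented matroids --- precisely the point of the remark following Proposition~\ref{prop:vg-assumption-iii}. Your argument is more explicit, actually producing an $\varepsilon$, but is tied to the realizable setting. One small correction: you do not need $N\in NBC(\A)$, and indeed the observation as stated does not assume it. Even if the $v_j$ are linearly dependent, $X^\perp$ still equals their span, so $\phi$ can be written (perhaps non-uniquely) as $\sum_{j\in N}\lambda_j v_j$, and any such representation yields a valid $\varepsilon$ by your argument. Thus your proof establishes the observation in its full stated generality.
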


Another way to phrase $X\cap\K = \emptyset$ is: there are no covectors $F$ having $F_i=+$ for all $i \in W$
and $F_j=0$ for all $j \in N$. With that in mind, the observation holds because if no such choice of signs $(\varepsilon_j)_{j \in N}$ existed,
one would obtain a family of covectors
$\{ F^{\varepsilon} \}_{\varepsilon \in \{+,-\}^N}$ to which one could repeatedly apply the elimination axiom ${\sf V3}$ and reach such a covector $F$ having $F_j=0$ for $j \in N$.

\begin{observation}[{{\cite[Section 6.3]{ziegler}}}]\label{obs:gordan}
From the definition of an oriented matroid dual, we know that every $F\in \mathfrak{D}^\ast$ is orthogonal to \emph{every} vector $D\in\mathfrak{D}$. In particular if there is a signed set $F$ on $[n]$ that is \emph{not} in $\mathfrak{D}^\ast$, then there is some $D\in\mathfrak{D}$ such that $\{F_e\cdot D_e \mid e\in E\}$ contains exactly one of $+$ or $-$.
\end{observation}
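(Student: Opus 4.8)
The plan is to observe that the statement is, in essence, the definition of the dual oriented matroid $M^\ast$ together with the negation of a universal quantifier, so there is essentially nothing to prove. Recall from the excerpt that the covectors are defined by $\mathfrak{D}^\ast = \{(F^+,F^-) : D \perp F \text{ for all } D \in \mathfrak{D}\}$, where $F \perp D$ means that the set of sign products $\{F_e \cdot D_e : e \in E\}$ either equals $\{0\}$ or contains both $+$ and $-$. Hence a signed set $F$ on $E = [n]$ fails to be a covector exactly when the defining condition of $\mathfrak{D}^\ast$ fails for $F$; that is, exactly when there is some vector $D \in \mathfrak{D}$ with $F \not\perp D$.

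First I would record what $F \not\perp D$ says. The set $\Sigma := \{F_e \cdot D_e : e \in E\}$ is a subset of $\{0,+,-\}$, and $F \not\perp D$ asserts that $\Sigma \neq \{0\}$ while $\Sigma$ does not contain both $+$ and $-$. The only subsets of $\{0,+,-\}$ meeting both requirements are $\{+\}$, $\{-\}$, $\{0,+\}$, and $\{0,-\}$, each of which contains exactly one of $+$ or $-$. This is precisely the conclusion of the observation, so combining it with the previous paragraph finishes the argument.

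I do not expect a genuine obstacle here: the only points needing (minor) care are negating the universal quantifier in the definition of $\mathfrak{D}^\ast$ correctly and carrying out the tiny case analysis on $\Sigma$. For context, in the realizable setting---where $\mathfrak{D}$ consists of the sign vectors of linear dependences among the normal vectors $v_1,\dots,v_n$ and $\mathfrak{D}^\ast$ of the sign vectors $(\mathrm{sign}\langle a, v_i\rangle)_{i}$ for $a \in \R^\ell$---this recovers the classical theorem of the alternative of Gordan \cite{gordan}, which is why we attach that name to it; but in the abstract oriented-matroid setting no separate argument is needed, since $\mathfrak{D}^\ast$ is by definition the orthogonal complement of $\mathfrak{D}$.
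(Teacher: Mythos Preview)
Your proposal is correct and matches the paper's treatment: the observation is stated without a separate proof, since it is nothing more than the contrapositive of the defining property of $\mathfrak{D}^\ast$ together with the trivial case analysis on $\Sigma \subseteq \{0,+,-\}$ that you spell out. There is nothing further to compare.
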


\begin{remark}
The crux of the preceding proof only uses
statements about vectors and covectors valid for oriented matroids.
Hence our results remain valid in that setting, as in the generalization by Gel'fand and Rybnikov \cite{GelfandRybnikov}
of the work by Gel'fand and Varchenko \cite{VarchenkoGelfand} to oriented matroids.
\end{remark}

\section{Proof of Theorem \ref{thm:koszul}}\label{sec:supersolvability}
 In this section we prove Theorem \ref{thm:koszul}, asserting that when one works over a field $\kk$, the associated graded ring
 $\vgrr_\kk(\A)$ is Koszul whenever $\A$ is a supersolvable arrangement.
 Some of the most well-studied hyperplane arrangements are supersolvable,
 and supersolvable arrangements are interesting, for example, because
 their Poincar\'e polynomial
 $\Poin(\A,t)$ factors into linear factors\footnote{This does not hold for cones of supersolvable arrangements, see \cite[Remark 5.6]{DorpalenKimReiner}.}
 \cite[Corollary 4.9]{stanley-hyperplanes}.
 Koszulity, on the other hand, is also interesting from an algebraic perspective. Koszul algebras come equipped with a natural Koszul dual quadratic algebra $A^{!}$,
 and the relationship between $A, A^{!}$ has implications for the coefficients of the Hilbert series of $A$.

Let $\kk$ be a field. Recall that $VG_{\kk}(\K)$ is the collection of maps $\{f: \C(\K) \to \kk\}$ with pointwise addition and multiplication. Theorem \ref{thm:main theorem} extends without modification to $VG_{\kk}(\K)$, and in fact some of the proofs are easier since $\GGG$ forms a Gr\"{o}bner basis (see Remark \ref{rem:integer-grobner-field}).

Before beginning the proof of Theorem \ref{thm:koszul}, we remind the reader of some standard results relating to supersolvable lattices and Koszul algebras. For a more detailed reference on Koszul algebras, we point the reader toward \cite{froberg97}.
Let $R$ be a \emph{commutative standard graded $\kk$-algebra}, i.e. $R\cong \kk[e_1,\dots, e_n]/I$ where $I$ is a homogeneous ideal and each $e_i$ has degree exactly $1$.
Suppose
\[
F_\bullet: \cdots \overset{\varphi_3}{\longrightarrow} R^{\beta_2} \overset{\varphi_2}{\longrightarrow} R^{\beta_1}
\overset{\varphi_1}{\longrightarrow} R
\longrightarrow \kk
\]
is a \emph{minimal free resolution} of the $R$-module $\kk = R/R_{+}$ where $R_+$ is the maximal homogeneous ideal, consisting of all elements of positive degree.
For details on free resolutions, see \cite{eisenbud}. We say $R$ is \emph{Koszul} if the nonzero entries of each $\varphi_i$ matrix are homogeneous of degree 1.
Say that an ideal is \emph{monomial} if it is generated by monomials. A monomial ideal is \emph{$\GGG$-quadratic} if it has a Gr\"{o}bner basis of monomials of degree two.  It is well-known (see \cite{conca},\cite[Chapter 15]{eisenbud}, \cite[Section 4]{froberg97}, for example) that:

\begin{prop}\label{prop:ggg-quad-means-koszul}
If $I$ a homogeneous ideal in $\kk[e_1,\dots, e_n]$
is generated by a Gr\"obner basis $\GGG$ consisting of quadratic
elements for some monomial order $\prec$,
then $R=\kk[e_1,\dots, e_n]/I$ is Koszul.
\end{prop}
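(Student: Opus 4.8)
The plan is to degenerate $R$ to a monomial algebra, where Koszulity is classical, and then transfer it back. Write $S=\kk[e_1,\dots,e_n]$ and $A:=S/\init_\prec(I)$. Since $\GGG$ is a Gr\"obner basis for $I$, the initial ideal $\init_\prec(I)$ is generated by $\{\init_\prec(g):g\in\GGG\}$, which by hypothesis consists of monomials of degree two; hence $A$ is a \emph{quadratic monomial algebra}. By a theorem of Fr\"oberg, every quadratic monomial algebra is Koszul (see \cite[Section 4]{froberg97}, \cite[Chapter 15]{eisenbud}); equivalently, the graded Betti numbers $\beta^{A}_{i,j}(\kk)$ of the residue field $\kk$ as an $A$-module vanish whenever $i\ne j$. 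If one wants this step to be self-contained, one can instead write down the minimal graded free resolution of $\kk$ over a quadratic monomial algebra explicitly and check that all its differentials are linear.

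Next I would transfer Koszulity from $A$ back to $R$ using the fact that $A$ is a flat degeneration of $R$: choosing a weight vector $w\in\mathbb{Z}^{n}_{\ge 0}$ with $\init_w(I)=\init_\prec(I)$, the associated one-parameter family over $\mathbb{A}^1$ has fiber $\cong R=S/I$ for $t\ne 0$ and fiber $A=S/\init_\prec(I)$ at $t=0$, with all fibers sharing the same Hilbert function. The graded Betti numbers of the residue field can only increase under this specialization, so $\beta^{R}_{i,j}(\kk)\le\beta^{A}_{i,j}(\kk)$ for all $i,j$. Combined with the vanishing from the previous paragraph, this forces $\beta^{R}_{i,j}(\kk)=0$ for $i\ne j$, i.e. $\kk$ admits a linear free resolution over $R$, which is precisely the definition of $R$ being Koszul.

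The step with real content is the semicontinuity of the Betti numbers of $\kk$ along the degeneration of $R$ to $A$: one has to set up the family in the graded category and compare the minimal free resolutions of the two residue fields. This is exactly the folklore statement that a $\GGG$-quadratic algebra is Koszul, and it is worked out in \cite{conca}; equivalently one could route the argument through Priddy's theorem, observing that a commutative algebra presented by a quadratic Gr\"obner basis is PBW and hence Koszul. In the actual write-up I would simply cite \cite{conca}, \cite[Chapter 15]{eisenbud}, and \cite[Section 4]{froberg97}; the only bookkeeping left to spell out is the first observation, that a quadratic Gr\"obner basis forces $\init_\prec(I)$ to be generated in degree two.
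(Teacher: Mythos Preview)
Your argument is the standard one and is correct. Note, however, that the paper does not actually prove this proposition: it is stated as a well-known fact with citations to \cite{conca}, \cite[Chapter 15]{eisenbud}, and \cite[Section 4]{froberg97}, and no proof is given. Your sketch---pass to the initial ideal to get a quadratic monomial algebra, invoke Fr\"oberg's theorem that such algebras are Koszul, then transfer back via semicontinuity of the Betti numbers of the residue field along the Gr\"obner degeneration---is precisely the argument one finds in those references (especially Conca's survey). So there is nothing to compare: you have supplied the proof the paper chose to omit, and your final paragraph already anticipates that in a polished write-up one would simply cite those sources, which is exactly what the paper does.
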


We are now prepared to define a supersolvable arrangement.

 \begin{defn}[{\cite[Definition 1.1]{stanley-lattices}, \cite[Definition 4.13]{stanley-hyperplanes}}]
A lattice $L$ is \emph{supersolvable} if there exists a maximal chain $\Delta$ satisfying: for every chain $K$ of $L$, the sublattice generated by $\Delta$ and $K$ is distributive.
An arrangement $\A$ is \emph{supersolvable} if $\LL(\A)$ is supersolvable.
\end{defn}

 The following result and its proof are analogous to
 a result of Peeva \cite[Theorem 4.3]{peeva}.

 \medskip

 \noindent\textbf{Theorem \ref{thm:koszul}}.
  If $\A$ is a supersolvable arrangement, then $\vgrr_{\kk}(\A)$ is Koszul.

 \begin{proof}
 A theorem of Bj\"{o}rner and Ziegler \cite[Theorem 2.8]{bjorner-ziegler} tells us that when $\A$ is a supersolvable arrangement, one can
 choose a linear ordering of the hyperplanes $H_1,\ldots,H_n$
 such that every broken circuit $\underline{C} \setminus \{i_0\}$
 contains some broken circuit of
 size two. Choose $\prec$ a degree monomial order on $\Z[e_1,\ldots,e_n]$ which restricts to the same
linear order on the variables $e_1 \prec \cdots \prec e_n$.

We wish to use the presentation $\vgrr_\kk(\A)=\kk[e_1,\ldots,e_n]/I$
 where $I=(\init_{\deg}(\GGG))$ that comes from Theorem~\ref{thm:main theorem}. From Remark \ref{rem:integer-grobner-field}, the generators $\init_{\deg}(\GGG)$ form a Gr\"{o}bner basis for $I\subseteq \kk[e_1,\ldots,e_n]$.
 Because $\A$ is a full arrangement, not a cone,
 $\init_{\deg}(\GGG)$ will contain only $\init_{\deg}(g)$ for
 {\sf Idempotent} and {\sf Circuit} relations $g$.
 The {\sf Idempotent} relations correspond to
 generators $\init_{\deg}(g)=e_i^2$ that are all quadratic.

 Each {\sf Circuit} relation corresponds to
 a generator $\init_{\deg}(g)$ which may not be
 quadratic:  its  degree is the size of the
 broken circuit $C \setminus \{i_0\}$, with
 $\init_{\prec}(g)=e_{C \setminus \{i_0\}}$. Since each is a squarefree monomial, it suffices to consider the monomials who indexing set is minimal under inclusion.
The Bj\"{o}rner-Ziegler result \cite[Theorem 2.8]{bjorner-ziegler}  implies that the minimal (under inclusion) broken circuits all have cardinality $2$.
From Proposition \ref{prop:ggg-quad-means-koszul}, it follows that $\vgrr_{\kk}(\A)$ is Koszul.
\end{proof}

One might ask if Theorem \ref{thm:koszul} has a cone analogue. Sadly there are cones $\K$ of supersolvable arrangements whose $\vgrr_{\kk}(\K)$ are not Koszul as we demonstrate by example. A particularly well-studied family of supersolvable arrangements are the \emph{Type A reflection arrangements} or \emph{braid arrangements};
see \cite[Cor. 4.10, Example 4.11(c)]{stanley-hyperplanes}. The braid arrangement $A_{n-1}$, consists of the
$\binom{n}{2}$ hyperplanes
\[
H_{ij} = \{ (x_1,x_2,\dots,x_n)\in \R^n  \mid x_i - x_j = 0\}
\]
for each pair $\{i,j\}$.

We wish to exhibit a cone $\K$ inside a braid arrangement
for which $\vgrr(\K)$ is not Koszul.
One way to prove something is \emph{not} Koszul uses
the following.

\begin{thm}[{{\cite[Section 4]{froberg75}}}]
Let $A$ be a Koszul algebra. Then there is another algebra $A^!$, the quadratic dual of $A$, whose Hilbert series is
$
\Hilb(A^!, t) = 1/\Hilb(A,-t).
$
In particular, if $A$ is Koszul, then $1/\Hilb(A,-t)$ has positive coefficients when considered as a power series in $\Z[t]]$.
\end{thm}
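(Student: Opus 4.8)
The plan is to run the classical Euler-characteristic computation on the minimal graded free resolution of the residue field $\kk$ over $A$, and then identify the resulting power series with $\Hilb(A^!,t)$ using Koszulity.

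First note that $A$ being Koszul forces $A$ to be quadratic: linearity of the resolution in homological degrees $\le 2$ forces generators in degree $1$ and relations in degree $2$, so the quadratic dual $A^!=T(V^*)/(R^\perp)$ is defined, where $V=A_1$ and $R\subseteq V\otimes V$. Set $b_i:=\dim_\kk\mathrm{Tor}_i^A(\kk,\kk)$. By hypothesis (this is the definition of Koszul given above) the minimal graded free resolution
\[
F_\bullet:\ \cdots\longrightarrow F_2\longrightarrow F_1\longrightarrow F_0\longrightarrow\kk\longrightarrow 0
\]
is linear, i.e.\ $F_i\cong A(-i)^{b_i}$; in particular it is finite in each internal degree, so Hilbert series are additive along it. Using $\Hilb(\kk,t)=1$ and $\Hilb(A(-i),t)=t^i\Hilb(A,t)$, the alternating sum of Hilbert series over $F_\bullet$ gives
\[
\Hilb(A,t)\cdot\sum_{i\ge 0}(-1)^i b_i\,t^i=1 \qquad\text{in }\Z[[t]].
\]
Substituting $t\mapsto -t$ turns this into $\Hilb(A,-t)\cdot\sum_{i\ge 0}b_i\,t^i=1$, hence $\sum_{i\ge 0}b_i\,t^i=1/\Hilb(A,-t)$.

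It remains to recognize $\sum_i b_i t^i$ as $\Hilb(A^!,t)$, and this is the step where Koszulity is essential. For any quadratic algebra the ``diagonal'' piece $\mathrm{Tor}_i^A(\kk,\kk)_i\subseteq V^{\otimes i}$ is canonically dual to $A^!_i$; for $i=2$ this is just $\mathrm{Tor}_2^A(\kk,\kk)_2=R$ together with $(A^!_2)^*=(R^\perp)^\perp=R$. Koszulity says precisely that $\mathrm{Tor}_i^A(\kk,\kk)$ is concentrated in internal degree $i$, so $b_i=\dim_\kk\mathrm{Tor}_i^A(\kk,\kk)_i=\dim_\kk A^!_i$, whence $\sum_i b_i t^i=\Hilb(A^!,t)$ and $\Hilb(A^!,t)=1/\Hilb(A,-t)$. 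Finally, each $b_i$ is a nonnegative integer with $b_0=1$, so $1/\Hilb(A,-t)=\sum_i b_i t^i$ has nonnegative coefficients, as asserted.

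The one substantive step is the middle one — the concentration $\mathrm{Tor}_i^A(\kk,\kk)=\mathrm{Tor}_i^A(\kk,\kk)_i$ and the duality $\mathrm{Tor}_i^A(\kk,\kk)_i\cong(A^!_i)^*$ — which is exactly what separates Koszul algebras from arbitrary quadratic ones. A clean way to package it is via the explicit Koszul complex $K_\bullet$ with $K_i=A\otimes_\kk(A^!_i)^*$ and differential induced by the canonical element of $V^*\otimes V\subseteq A^!_1\otimes A_1$: one checks that $A$ is Koszul if and only if $K_\bullet$ resolves $\kk$, and then $K_\bullet$ \emph{is} the minimal resolution, so $F_i=A\otimes(A^!_i)^*$ and the alternating-sum computation above directly reads off $\Hilb(A^!,-t)\,\Hilb(A,t)=1$. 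Everything else — additivity of Hilbert series along an exact complex, the substitution $t\mapsto-t$, and the positivity conclusion — is routine bookkeeping.
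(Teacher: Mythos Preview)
The paper does not provide its own proof of this statement; it is simply quoted from Fr\"oberg \cite[Section 4]{froberg75} and then used as a black box to rule out Koszulity in the subsequent example. So there is no in-paper argument to compare against.

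Your sketch is the standard proof and is correct. The Euler-characteristic step is fine: linearity of the minimal resolution gives $F_i\cong A(-i)^{b_i}$, additivity of Hilbert series along the exact complex gives $\Hilb(A,t)\sum_{i\ge 0}(-1)^ib_it^i=1$, and the substitution $t\mapsto -t$ converts this to $\sum_{i\ge 0}b_it^i=1/\Hilb(A,-t)$. The identification $b_i=\dim_\kk A^!_i$ is, as you say, the only step with real content; your two descriptions of it --- the duality $\mathrm{Tor}_i^A(\kk,\kk)_i\cong (A^!_i)^*$ for quadratic algebras combined with the degree concentration coming from Koszulity, or equivalently the exactness of the Koszul complex $K_i=A\otimes_\kk(A^!_i)^*$ --- are both standard and correct. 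One minor remark: strictly speaking you obtain \emph{nonnegative} coefficients (as you wrote), which is what is actually used in the paper's application; the word ``positive'' in the statement is a mild abuse.
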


\begin{example}
\noindent The cone of $A_5$ given by
\[
\K = \{\mathbf{x}\in\R^6 \mid x_1\leq x_2,~x_3\leq x_4,~x_5\leq x_6\}
\]
does not yield a Koszul $\vgrr_{\kk}(\K)$.
The Hilbert series of $\vgrr_{\kk}(\K)$ is
\[
\Hilb(\vgrr_{\kk}(\K),t) = 1+ 12t+43t^3 +30t^3 + 4t^4.
\]
The first few terms of $1/\Hilb(\vgrr_{\kk}(\K),-t)$ are
\begin{align*}
   \frac{1}{1-12t+43t^3-30t^3 + 4t^4}
   & =  1 + 12t + 101t^2 + 725t^3 + 4725t^4
   + 28464t^5
   + 159769t^6\\
   & +  832122t^7 + 3950417t^8 + 16302972t^9
   + 50092317 t^{10}\\
   & + 15264030 t^{11}  - 1497513779t^{12}
+ \cdots
\end{align*}
The coefficient of $t^{12}$ is negative, meaning that $1/\Hilb(\vgrr_{\kk}(\K),-t)$ is not the Hilbert series of a ring.
\end{example}

This counterexample begs the following question:

\begin{question}
Is there some simple, combinatorial condition on $\LI(\K)$ for a cone $\K$, in the spirit of supersolvability for the full lattice $\LL(\A)$ of the arrangement, that implies Koszulity of $\vgrr_{\kk}(\K)$?
\end{question}

Even in the arrangement case, the connection between supersolvable arrangements and Koszul $\vgrr_{\kk}(\A)$ remains opaque. Fr\"{o}berg tells us that the converse to Proposition \ref{prop:ggg-quad-means-koszul} is false in general \cite[Note on p.39]{froberg75}, but one might ask:
 if $\vgrr_{\kk}(\A)$ is Koszul, is $\A$ supersolvable? The analogous question is famously open for Orlik-Solomon algebras \cite[Example 4.5]{peeva}, \cite[Example 6.22]{yuzvinsky}.

\section*{Acknowledgements}

The author thanks Victor Reiner for motivating discussions, Franco Saliola for enlightening conversations about SAGE \cite{sagemath}, and the anonymous reviewer for many helpful comments.
She also thanks Sarah Brauner, Darij Grinberg, and Trevor Karn for pointing out typos/giving feedback on drafts of this paper.



\bibliography{bibliography}{}
\bibliographystyle{plain}

\end{document}